\numberwithin{equation}{section}
\newtheorem{theorem}{Theorem}[section]
\newtheorem{prop}[theorem]{Proposition}
\newtheorem{lemma}[theorem]{Lemma}
\newtheorem{corollary}[theorem]{Corollary}
\theoremstyle{definition}
\newcommand{\Z}{\mathbb Z}
\newcommand{\R}{\mathbb R}
\newcommand{\N}{\mathbb N}
\newcommand{\intBd}{\partial_i}
\newcommand{\outBd}{\partial_o}
\newcommand{\diam}{\textrm{diam}}
\newcommand{\Rtheta}{\theta^-}
\newtheorem{remark}{Remark}[section]
\newcommand{\ssubset}{\subset\subset}
\newcommand{\BCap}{\text{BCap}}
\newcommand{\EsC}{\text{Es}}
\newcommand{\Esc}{\text{Esc}}
\newcommand{\KRW}{\mathbf k}
\newcommand{\SRW}{\mathbf s}
\newcommand{\BRW}{\mathbf b}
\newcommand{\Snake}{\mathcal S}
\newcommand{\rp}{\widetilde{r}}
\newcommand{\pS}{\mathbf p}
\newcommand{\rS}{\mathbf r}
\newcommand{\DeFine}{\doteq}
\newcommand{\dist}{\rho}
\newcommand{\Ball}{\mathcal C}
\newcommand{\BallE}{\mathcal B}
\newcommand{\Rad}{\text{Rad}}
\newcommand{\Hm}{\mathcal{H}}
\newcommand{\Num}{\mathcal{N}}
\newcommand{\pr}{\Xi}
\newcommand{\subt}{\rho}
\newcommand{\EE}{\mathcal{E}}
\begin{document}
\title[On CBRW III]{On the critical branching random walk III: the critical dimension}
\author{Qingsan Zhu}
\address{~Department of Mathematics, University of British
Columbia,
Vancouver, BC V6T 1Z2, Canada}
\email{qszhu@math.ubc.ca}
\date{}
\maketitle

\begin{abstract}
In this paper, we study the critical branching random walk in the critical dimension, $\Z^4$. We provide the asymptotics of the probability of visiting a fixed finite subset and the range of the critical branching random walk conditioned on the total number of offsprings. We also prove that conditioned on visiting, the first visiting point converges in distribution.
\end{abstract}

\section{Introduction}

In this paper, we continue our study on critical branching random walks. In the previous paper \cite{Z161}, we show that (under some assumptions about the offspring distribution and jump distribution), when $d\geq 5$, fix a finite subset $K\subseteq\Z^d$ and write $\Snake_x$ for the branching random walk starting from $x$, then,
\begin{equation}\label{p1}
\lim_{x\rightarrow \infty}\|x\|^{d-2}\cdot P(\Snake_x \text{ visits } K)= a_d\BCap(K);
\end{equation}
\begin{equation*}
\lim_{x\rightarrow \infty}P(\Snake_x(\tau_K)=a|\Snake_x \text{ visits } K)=\EsC_K(a)/\BCap(K),
\end{equation*}
\begin{equation*}
\lim_{x\rightarrow \infty}P(\Snake_x(\xi_K)=a|\Snake_x \text{ visits } K)=\Esc_K(a)/\BCap(K),
\end{equation*}
where $\tau_K$ and $\xi_K$ are the first and last visiting point (according to the Depth-First search on the tree) respectively, $a_d=\frac{1}{2d^{(d-2)/2}\sqrt{\det Q}}\Gamma(\frac{d-2}{2})\pi^{-d/2}$, $\|x\|=\sqrt{x\cdot Q^{-1}x}/\sqrt{d}$ with $Q$ being the covariance matrix of the jump distribution and $\EsC_K(a), \Esc_K(a)$ and $\BCap(K)$ are introduced in that paper.

In this paper we concentrate on the case $d=4$ and we provide the asymptotics of the probability of visiting a finite subset by a branching random walk and some related results. Before stating our results, let us specify the assumptions that will be in force throughout this paper. We always assume:

\begin{itemize}
\item the offspring distribution $\mu$ is a given distribution on $\N$ with mean one and finite variance $\sigma^2>0$;
\item the jump distribution $\theta$ is a given distribution on $\Z^4$ with mean zero not supported on a strict subgroup of $\Z^4$ and has finite exponential moment, i.e. for some $\lambda>0$,
    $$
    \sum_{z\in\Z^4}\theta(z)\cdot \exp(\lambda|z|)<\infty.
    $$
\end{itemize}

Our first result is
\begin{theorem}\label{MT4}
For any nonempty finite subset $K$ of $\Z^4$, we have:
\begin{equation}\label{MT_4}
\lim_{x\rightarrow \infty} (\|x\|^2\log \|x\|)\cdot P(\Snake_x\text{ visits }K)=\frac{1}{2\sigma^2}.
\end{equation}
\end{theorem}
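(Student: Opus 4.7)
The plan is to derive and analyze a semilinear discrete equation for $u(x) := P(\Snake_x \text{ visits } K)$. Conditioning on the first generation of offspring and using the tree structure shows that $v = 1 - u$ satisfies $v(x) = \phi\bigl((\theta \ast v)(x)\bigr)$ for $x \notin K$ with $v = 0$ on $K$, where $\phi$ is the offspring probability generating function. Using $\phi(1-t) = 1 - t + \tfrac{\sigma^2}{2}t^2 + O(t^3)$ near $t = 0$, this is equivalent to
\begin{equation*}
L u(x) = \tfrac{\sigma^2}{2}\, u(x)^2 + O\bigl(u(x)^3\bigr), \qquad x \notin K,
\end{equation*}
where $L u(x) := (\theta \ast u)(x) - u(x)$ is the generator of the underlying random walk.

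The heart of the proof is the observation that $d = 4$ is critical for this equation. Using the discrete--continuum correspondence $L \approx \tfrac{1}{2}\sum_{i,j} Q_{ij}\partial_i\partial_j$, a direct computation in radial coordinates with the ansatz $f(x) = c/(\|x\|^2 \log \|x\|)$ gives
\begin{equation*}
L f(x) \sim \frac{c}{4\,\|x\|^4\,(\log \|x\|)^2}, \qquad \tfrac{\sigma^2}{2}\, f(x)^2 = \frac{\sigma^2 c^2}{2\,\|x\|^4\,(\log \|x\|)^2}.
\end{equation*}
Requiring $Lf \sim \tfrac{\sigma^2}{2}f^2$ forces $c = 1/(2\sigma^2)$ as the unique self-consistent constant. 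Because this balance is set by the equation in the far field, the coefficient is universal (independent of $K$), matching the theorem.

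The task is then to convert this heuristic into rigorous two-sided bounds. The first-moment identity together with the Green's function asymptotic $G(x,y) \sim a_4 \|x\|^{-2}$ gives the a priori bound $u(x) = O(\|x\|^{-2})$. For a lower bound of the correct order, I would use the branching two-point function
\begin{equation*}
E\bigl[N(y)\, N(z)\bigr] = G(x,y)\,\mathbf{1}_{y=z} + \sigma^2 \sum_{w} G(x,w)\,G^*(w,y)\,G^*(w,z)
\end{equation*}
(where $N(y)$ is the number of visits of $\Snake_x$ to $y$ and $G^*(w,y) = G(w,y) - \delta_{w,y}$), evaluate the sum in $\Z^4$ to obtain $E\bigl[\bigl(\sum_{y\in K} N(y)\bigr)^2\bigr] = O(\log\|x\|/\|x\|^2)$, and apply Paley--Zygmund. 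For the upper bound with the sharp constant, I would build an explicit supersolution $\bar u(x) \sim (1+\varepsilon)/(2\sigma^2 \|x\|^2 \log \|x\|)$ in the far field, matched so that $\bar u \geq 1$ near $K$, verify $L\bar u \geq \tfrac{\sigma^2}{2}\bar u^2 + O(\bar u^3)$ on $K^c$ from the calculation above, and invoke a discrete comparison principle for the semilinear operator to deduce $u \leq \bar u$. A parallel subsolution argument provides the matching lower bound.

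The main obstacle is extracting the exact constant $1/(2\sigma^2)$ rather than $(1\pm\varepsilon)/(2\sigma^2)$. The linear hitting probability $h_K(x) \sim \BCap(K)\, a_4/\|x\|^2$ and the nonlinear feedback $\tfrac{\sigma^2}{2}\, G \ast u^2$ both decay at the same $\|x\|^{-2}$ rate, and the logarithmic improvement in the theorem emerges from their delicate cancellation at leading order. Controlling this cancellation requires sharp Green's function estimates, careful treatment of the boundary layer near $K$ where $u$ transitions from $O(1)$ to the far-field asymptotic, and uniform control of the $O(u^3)$ remainder throughout this regime. Verifying that neither the detailed geometry of $K$ nor the cubic correction perturbs the leading coefficient $1/(2\sigma^2)$ is the technical crux of the proof.
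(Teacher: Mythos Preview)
Your approach via the semilinear recursion $Lu \approx \tfrac{\sigma^2}{2}u^2$ is genuinely different from the paper's. The paper never writes this equation; instead it exploits the representation $\pS_K(x)=G_K(x,K)$, where $G_K$ is the Green function of the $\theta$-walk killed at rate $\rS_K(\cdot)$, together with the first-moment identity $\sum_{\gamma:x\to K}\BRW(\gamma)\,\Num(\gamma)=g(x,K)$. The key step (Proposition~\ref{thm-es}) shows that for a typical path $\gamma$ from $\partial\Ball(n)$ to $K$ the conditional expected number of visits $\Num(\gamma)$ concentrates near $2a_4|K|\sigma^2\log n$; dividing $g(x,K)\sim a_4|K|\|x\|^{-2}$ by this typical value of $\Num$ yields both directions of the limit at once. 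The argument uses the prior two-sided bound $\pS_{\{0\}}(x)\asymp(\|x\|^2\log\|x\|)^{-1}$ from \cite{Z15} as input, but needs no comparison principle.

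Your heuristic computation of the constant is correct, but the rigorous scheme has a real gap on the lower-bound side. The supersolution half is sound in principle: since $u=\lim_n T^n[\mathbf{1}_K]$ with $T[w]=1-\phi(1-\theta\!*\!w)$ monotone, any $\bar u$ with $\bar u\ge T[\bar u]$ on $K^c$ and $\bar u\ge 1$ on $K$ dominates $u$ (note, though, that this translates to $L\bar u\le \tfrac{\sigma^2}{2}(\theta\!*\!\bar u)^2+\cdots$, not $\ge$ as you wrote). The ``parallel subsolution argument'' does not go through symmetrically: $u$ is the \emph{minimal} nonnegative fixed point of $T$, so $\underline u\le T[\underline u]$ alone says nothing about $\underline u\le u$. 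You would need a uniqueness theorem for fixed points decaying to zero at infinity, and in the critical dimension---where the linear part and the quadratic feedback balance exactly---this is the heart of the matter, not a routine add-on. Two smaller points: with only a second-moment hypothesis on $\mu$ the remainder in $\phi(1-t)$ is $o(t^2)$, not $O(t^3)$; and your Paley--Zygmund step gives only the order $(\|x\|^2\log\|x\|)^{-1}$, so it cannot substitute for the missing sharp subsolution bound.
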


Note that, compared with the analogous result \eqref{p1} in high dimensions, we have a logarithm correction in the limit and the right hand side is independent of $K$. Similar to the case in the high dimensions, we also show that conditioned on visiting, the first visiting point converges in distribution:
\begin{theorem}\label{MT6}
Assume further that $\theta$ has finite range. Then, for any nonempty finite subset $K$ of $\Z^4$ and $a\in K$, we have
\begin{equation}
\lim_{x\rightarrow \infty}P(\Snake_x(\tau_K)=a|\Snake_x \text{ visits } K)=\frac{\sigma^2}{4\pi^2\sqrt{\det Q}}\EE_K(a),
\end{equation}
where $\tau_K$ is the first visiting time, and $\EE_K(a)$ is defined later in \eqref{def-EE}.
\end{theorem}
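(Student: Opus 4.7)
The plan is to reduce Theorem~\ref{MT6} to a refined one-point asymptotic of the first-visit probability. By Bayes' rule,
$$P(\Snake_x(\tau_K)=a\mid\Snake_x\text{ visits }K)=\frac{P(\Snake_x(\tau_K)=a)}{P(\Snake_x\text{ visits }K)},$$
and Theorem~\ref{MT4} supplies the denominator, so the task becomes to show
$$\lim_{x\to\infty}(\|x\|^2\log\|x\|)\cdot P(\Snake_x(\tau_K)=a)=\frac{\EE_K(a)}{8\pi^2\sqrt{\det Q}}.$$
The constant $1/(8\pi^2\sqrt{\det Q})$ is precisely the leading coefficient in the asymptotic $G(x)\sim c_4/\|x\|^2$ for the random walk Green's function in $\Z^4$, which is the structural hint: the proof should extract a Green's function factor contributing $1/\|x\|^2$ and a survival factor contributing the $1/\log\|x\|$ correction, with the dependence on $a$ isolated as a finite boundary sum.

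Next, I would decompose $P(\Snake_x(\tau_K)=a)$ along the path in the Galton--Watson tree from the root to the first DFS vertex that maps into $K$. Writing this path spatially as $x=y_0,y_1,\dots,y_n=a$ with $y_i\notin K$ for $i<n$, the standard size-biased/spine decomposition for critical Galton--Watson trees yields a representation of the form
$$P(\Snake_x(\tau_K)=a)=\sum_{n\ge 0}\sum_{\substack{y_0,\ldots,y_n\\ y_i\notin K,\,i<n}}\prod_{i=1}^n\theta(y_i-y_{i-1})\cdot\prod_{i=0}^{n-1}\psi(y_i,K),$$
where $\psi(y,K)$ encodes the probability (weighted by the size-biased offspring count) that the "earlier" independent subtrees attached at a spine vertex mapped to $y$ all fail to visit $K$. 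By Theorem~\ref{MT4} applied to those subtrees, $1-\psi(y,K)\sim c/(\|y\|^2\log\|y\|)$ as $y\to\infty$, so the above sum is the Green's function of a random walk killed at the slowly decaying rate $1-\psi(\,\cdot\,,K)$, started at $x$ and ending at $a$.

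The asymptotic analysis then splits into two regimes. Far from $K$, a local CLT argument shows that the killed Green's function behaves as $G_\psi(x,a)\sim G(x,a)/\log\|x\|$; the logarithmic factor arises because the killing rate, integrated along typical diffusive trajectories of length $\sim\|y\|^2$, grows like $\log\|y\|$, and this survival cost is concentrated over the "macroscopic" part of the trajectory. Near $K$, uniformity of the local CLT combined with the finite-range assumption on $\theta$ lets one extract a finite boundary sum over random walks first hitting $K$ at $a$, with the appropriate non-visiting corrections for the subtrees; this is the precise content of the definition of $\EE_K(a)$ in \eqref{def-EE}. Combining the two regimes gives $P(\Snake_x(\tau_K)=a)\sim c_4\EE_K(a)/(\|x\|^2\log\|x\|)$, which is the claim.

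The main obstacle is the sharp uniform control of $\psi(y,K)$: the error in Theorem~\ref{MT4} must be small enough (in particular uniform down to distances comparable to $\mathrm{diam}(K)$) to survive accumulation across $\sim\|x\|^2$ spine sites, since the total killing one wishes to see is only of order $1/\log\|x\|$. A secondary obstacle is matching the two regimes: one must identify the contribution of the terminal excursion into $K$ as $\EE_K(a)$ and show that atypical spines (long excursions away from the geodesic, or anomalous behaviour near $K$) give lower-order contributions. The finite-range hypothesis on $\theta$ is used precisely here, both for the local CLT and to rule out such boundary pathologies.
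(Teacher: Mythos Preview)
Your approach matches the paper's: Bayes plus the spine decomposition (which is precisely Proposition~\ref{KKey}, so $P(\Snake_x(\tau_K)=a)=G_K(x,a)$ with killing function $\KRW=\rS_K$), followed by a far/near analysis of this killed Green function.

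One imprecision to flag: your far-regime claim $G_\psi(x,a)\sim G(x,a)/\log\|x\|$ cannot be right as written, since the right side is independent of $a$ (up to lower order) while the limit in the theorem is not; the correct asymptotic is $G_K(x,a)\sim a_4\,\EE_K(a)/(\|x\|^2\log\|x\|)$. The paper makes the two-regime split precise by cutting at radius $n=\|x\|/\log\|x\|$ via the First-Visit Lemma,
\[
G_K(x,a)=\sum_{z\in\outBd\Ball(n)}G_K(x,z)\,\Hm_\KRW^{\Ball(n)}(z,a).
\]
At this scale $\log n\sim\log\|x\|$, so Lemma~\ref{G-4d} gives $G_K(x,z)\sim g(x,z)\sim a_4\|x\|^{-2}$ with \emph{no} logarithmic loss on the outer piece; the entire factor $1/\log\|x\|$ together with the $a$-dependence sits in the inner escape sum $\sum_z\Hm_\KRW^{\Ball(n)}(z,a)\sim\EE_K(a)/\log n$, which is exactly the defining limit~\eqref{def-EE}. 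Consequently your stated ``main obstacle'' (uniform error control in Theorem~\ref{MT4} down to $\diam(K)$, accumulated over $\sim\|x\|^2$ spine sites) is not the right target: the required control is already packaged in Lemma~\ref{G-4d} and in the existence of the limit~\eqref{def-EE}, both of which follow from Theorem~\ref{MT4} rather than requiring a refinement of it.
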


In the supercritical dimensions $d\geq5$, conditioned on visiting, the 'entering measure' also converges in distribution (See Theorem 9.1 in \cite{Z161}). However, this is incorrect in the critical dimension $d=4$. The reason is that conditioned on visiting, the total number of 'entering points' will blow up, as the starting point goes to infinity. The theorem above states that conditioned on visiting , the first 'entering point' converges in distribution. By symmetry, one can see that the conditional last 'entering point' also converges to the same distribution. The distribution of the last visiting point is determined by the last 'entering point', hence converges.

Furthermore, we give the asymptotics of the range $R_n$ of the critical branching random walk conditioned on the total size of offsprings being $n$:
\begin{theorem}\label{MT5}
Assume further that $\theta$ is symmetric. Then, we have:
\begin{equation}\label{MT_5}
\frac{\log n}{n}R_n \stackrel{n\rightarrow \infty}{\longrightarrow} \frac{16\pi^2\sqrt{\det Q}}{\sigma^2}
\quad \text{in probability}.
\end{equation}
\end{theorem}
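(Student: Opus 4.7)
The plan is to apply the first and second moment method. Write $R_n=\sum_{y\in\Z^4}\mathbf 1_{\{\Snake_0\text{ visits }y\}}$ under the conditional law $P(\,\cdot\mid |T|=n)$, where $T$ denotes the underlying critical Galton--Watson tree. Then convergence in probability of $(\log n)R_n/n$ to the claimed constant follows by Chebyshev from the first-moment estimate
\begin{equation*}
E[R_n\mid |T|=n]\sim\frac{16\pi^2\sqrt{\det Q}}{\sigma^2}\cdot\frac{n}{\log n}
\end{equation*}
together with the variance bound $\mathrm{Var}(R_n\mid |T|=n)=o((n/\log n)^2)$.

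For the first moment, translation invariance of the jump law together with the symmetry assumption on $\theta$ gives $P(\Snake_0\text{ visits }y\mid |T|=n)=P(\Snake_y\text{ visits }0\mid |T|=n)$, so the problem reduces to transferring the unconditional asymptotic supplied by Theorem \ref{MT4} to the conditional setting. I would decompose the event $\{\Snake_y\text{ visits }0\}$ at the first-visit vertex $v^{*}$ (in the depth-first order): the tree splits into a spine from root to $v^{*}$, with i.i.d.\ critical Galton--Watson subtrees (suitably conditioned so that they do not create an earlier visit to $0$) attached along the spine, and a free critical Galton--Watson subtree rooted at $v^{*}$. Combining Theorem \ref{MT4}, the Dwass--Kemperman asymptotic $P(|T|=n)\sim(\sigma\sqrt{2\pi n^3})^{-1}$, and a local limit theorem for sums of i.i.d.\ random variables in the domain of attraction of the one-sided $1/2$-stable law (to handle the aggregate subtree size), one expects to obtain, for $y$ of order $n^{1/4}$, an asymptotic of the form
\begin{equation*}
P(\Snake_y\text{ visits }0\mid |T|=n)\sim\frac{\Psi(y/n^{1/4})}{\log n}
\end{equation*}
with $\Psi$ explicit. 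Summing over $y$ via the change of variables $y=n^{1/4}z$ reduces the computation to an integral that evaluates to the stated constant.

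For the variance I expand $E[R_n^{2}\mid |T|=n]=\sum_{y_1,y_2}P(\Snake_0\text{ visits both }y_1\text{ and }y_2\mid |T|=n)$ and decompose the joint visiting event at the most recent common ancestor $v^{**}$ of the first-visit vertices to $y_1$ and $y_2$. Conditionally on $v^{**}$, the two sub-branches reaching $y_1$ and $y_2$ are independent critical branching random walks started at $X_{v^{**}}$; applying Theorem \ref{MT4} to each branch and combining with the local limit theorem for $|T|$ yields an asymptotic factorization $E[R_n^{2}\mid |T|=n]=(1+o(1))(E[R_n\mid |T|=n])^{2}$.

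The hardest part is pinning down the exact constant in the first moment, which requires keeping careful track of the logarithmic factors coming out of Theorem \ref{MT4} through the spine decomposition and the local limit theorem for the total tree size. The variance estimate is of similar flavor but additionally demands uniformity in $(y_1,y_2)$, together with a dedicated argument in the regime $\|y_1-y_2\|\ll n^{1/4}$ where the two sub-branches can be atypically short and the naive factorization is less precise.
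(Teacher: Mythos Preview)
Your approach is genuinely different from the paper's and, as written, has a real gap. The paper does \emph{not} compute moments directly under the conditional law $P(\,\cdot\mid |T|=n)$. Instead it first works with the \emph{invariant snake} $\Snake^I_0$ (an infinite-spine object with no size constraint), where an invariant shift $\varsigma$ on spatial trees reduces both $E[R^I_n]$ and $E[(R^I_n)^2]$ to sums of non-return probabilities $P(\hat v_j\neq 0,\ 1\le j\le m)$; these are handled by Proposition~\ref{esc-4d}, which in turn comes from Proposition~\ref{thm-es}. Only after establishing $\frac{\log n}{n}R^I_n\to 16\pi^2\sqrt{\det Q}/\sigma^2$ in $L^2$ does the paper pass to the conditioned tree, via an absolute-continuity argument: the law of the first $\lfloor an\rfloor$ vertices under $\pr^n$ has a density against the law under $\pr^\infty$ that is bounded uniformly in $n$ (computed through Lukasiewicz paths and Kemperman's formula). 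This sidesteps entirely the difficulty of doing moment computations under the size conditioning.

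Your variance step is where the plan breaks. You write that, conditionally on the most recent common ancestor $v^{**}$, the two sub-branches are \emph{independent critical branching random walks}, and you then apply Theorem~\ref{MT4} to each. Under $P(\,\cdot\mid |T|=n)$ this is false: the branch sizes are constrained to sum (together with the rest of the tree) to $n$, so the branches are neither independent nor unconditioned, and Theorem~\ref{MT4}---which concerns an unconditioned snake whose size is typically $O(1)$---does not apply to them. One can try to repair this by de-conditioning via a local limit theorem for the stable-$\tfrac12$ sums of subtree sizes, but that requires uniform control of the conditional visiting probability jointly in the branch size and in $y$, and you would essentially need a conditioned version of Theorem~\ref{MT4} of the form $P(\Snake_y\text{ visits }0\mid |T|=m)\sim\Psi(y/m^{1/4})/\log m$ uniformly over the relevant range---a result that is not available and is itself comparable in difficulty to the theorem you are proving. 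The same issue already bites in your first-moment sketch: the ``i.i.d.\ critical Galton--Watson subtrees'' along the spine are not i.i.d.\ once $|T|=n$ is imposed, so the passage from Theorem~\ref{MT4} to the conditional estimate is not the routine transfer you describe. The paper's detour through the invariant snake is precisely what makes these dependencies disappear.
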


The case when $\mu$ is the critical geometric distribution was proved by Le Gall and Lin (\cite{LL141}):
\begin{equation*}
\frac{\log n}{n}R_n\stackrel{n\rightarrow \infty}{\longrightarrow} 8\pi^2\sqrt{\det Q}\quad \text{in }L^2.
\end{equation*}

\section{Preliminaries}
We begin with some notations. For a set $K\subseteq\Z^4$, we write $|K|$ for its cardinality. We write $K\ssubset\Z^4$ to express that $K$ is a finite nonempty subset of $\Z^4$. For $x\in\Z^4$, we denote by $|x|$ the Euclidean norm of $x$. We will mainly use the norm $\|\cdot\|$ corresponding to the jump distribution $\theta$, i.e. $\|x\|=\sqrt{x\cdot Q^{-1}x}/2$, where $Q$ is the covariance matrix of $\theta$. For convenience, we set $|0|=\|0\|=0.5$. We denote by $\diam(K)=\sup\{\| a-b\|:a, b\in K\}$, the diameter of $K$ and by $\Rad(K)=\sup\{\| a\| : a\in K\}$, the radius of $K$ respect to $0$. We write $\Ball(r)$ for the ball $\{z\in\Z^4: \| z\|\leq r\}$ and $\BallE(r)$ for the Euclidean ball $\{z\in\Z^4: |z|\leq r\}$. For any subsets $A,B$ of $\Z^4$, we denote by $\dist(A,B)=\inf\{ \| x-y \|: x\in A, y\in B\}$ the distance between $A$ and $B$. When $A=\{x\}$ consists of just one point, we just write $\dist(x,B)$ instead.
For any path $\gamma:\{0,\dots,k\}\rightarrow \Z^4$, we let $|\gamma|$ stand for $k$, the length, i.e. the number of edges of $\gamma$ , $\widehat{\gamma}$ for $\gamma(k)$, the endpoint of $\gamma$ and $[\gamma]$ for $k+1$, the number of vertices of $\gamma$. Sometimes we just use a sequence of vertices to express a path. For example, we may write $(\gamma(0),\gamma(1),\dots, \gamma(k))$ for the path $\gamma$. For any $B\subset \Z^d$, we write $\gamma\subseteq B$ to express that all vertices of $\gamma$ except the starting point and the endpoint, lie inside $B$, i.e. $\gamma(i)\in B$ for any $1\leq i\leq |\gamma|-1$. If the endpoint of a path $\gamma_1:\{0,\dots,|\gamma_1|\}\rightarrow \Z^4$ coincides with the starting point of another path $\gamma_2:\{0,\dots,|\gamma_2|\}\rightarrow \Z^4$, then we can define the composite of $\gamma_1$ and $\gamma_2$ by concatenating $\gamma_1$ and $\gamma_2$:
$$
\gamma_1\circ\gamma_2:\{0,\dots,|\gamma_1|+|\gamma_2|\}\rightarrow \Z^4,
$$
$$
\gamma_1\circ\gamma_2(i)=\left\{
\begin{array}{ll}
\gamma_1(i),&\text{ for } i\leq|\gamma_1|;\\
\gamma_2(i-|\gamma_1|),&\text{ for } i\geq|\gamma_1|.\\
\end{array}
\right.
$$

We now state our convention regarding constants. Throughout the text (unless otherwise specified), we use $C$ and $c$ to denote positive constants depending only on the critical distribution $\mu$ and the jump distribution $\theta$, which may change from place to place. Dependence of constants on additional parameters will be made or stated explicit. For example, $C(\lambda)$ stands for a positive constant depending on $\mu,\theta, \lambda$. For functions $f(x)$ and $g(x)$, we write $f\sim g$ if $\lim_{x\rightarrow \infty}(f(x)/g(x))=1$. We write $f\preceq g$, respectively $f\succeq g$, if there exist constants $C$ such that, $f\leq Cg$, respectively $f\geq Cg$. We use $f\asymp g$ to express that $f\preceq g$ and $f\succeq g$. We write $f\ll g$ if $\lim_{x\rightarrow \infty}(f(x)/g(x))=0$.

\subsection{Finite and infinite trees.} We are interested in rooted ordered trees (plane trees), in particular, Galton-Watson (GW) trees and its companions. Recall that $\mu=(\mu(i))_{i\in\N}$ is a given critical distribution with finite variance $\sigma^2>0$. Note that we exclude the trivial case that $\mu(1)=1$. Throughout this paper, $\mu$ will be fixed. Define another probability measure $\widetilde{\mu}$ on $\N$, call the \textbf{adjoint measure} of $\mu$ by setting $\widetilde{\mu}(i)=\sum_{j=i+1}^\infty \mu(j)$. Since $\mu$ has mean $1$, $\widetilde{\mu}$ is indeed a probability measure. The mean of $\widetilde{\mu}$ is $\sigma^2/2$. A Galton-Watson process with distribution $\mu$ is a process starting with one initial particle, with each particle having independently a random number of children due to $\mu$. The Galton-Watson tree is just the family tree of the Galton-Watson process, rooted at the initial particle. We simply write \text{$\mu$-GW tree} for the Galton-Watson tree with offspring distribution $\mu$.  If we just change the law of the number of children for the root, using $\widetilde{\mu}$ instead of $\mu$ (for other particles still use $\mu$), the new tree is called an \textbf{adjoint $\mu$-GW tree}. The \textbf{infinite $\mu$-GW tree} is constructed in the following way: start with a semi-infinite line of vertices, called the spine, and graft to the left of each vertex in the spine an independent adjoint $\mu$-GW tree, called a bush. The infinite $\mu$-GW tree is rooted at the first vertex of the spine. Here the left means that we assume every vertex in spine except the root is the youngest child (the latest in the Depth-First search order) of its parent. The \textbf{invariant $\mu$-GW tree} is defined similarly to the infinite $\mu$-GW tree except that we graft to the root (the first vertex of the spine) an independent $\mu$-GW tree instead of an independent adjoint $\mu$-GW tree.  We also need to introduce the so-called \textbf{$\mu$-GW tree conditioned on survival}. Start with a semi-infinite path, called the spine, rooted at the starting point. For each vertex in the spine, with probability $\mu(i+j+1)$ ($i,j\in\N$), it has totally $i+j+1$ children, with exactly $i$ children elder than the child corresponding to the next vertex in the spine, and exactly $j$ children younger. For any vertex not in the spine, it has a random number of children due to $\mu$. The number of children for different vertices are independent. The random tree generated in this way is just the $\mu$-GW tree conditioned on survival. Each tree is ordered using the classical order according to Depth-First search starting from the root. Note that the subtree generated by the vertices of the spine and all vertices on the left of the spine of the $\mu$-GW tree conditioned on survival has the same distribution as the infinite $\mu$-GW tree.

\subsection{Tree-indexed random walk.} Now we introduce the random walk in $\Z^4$ with jump distribution $\theta$, indexed by a random plane tree $T$. First choose some $a\in\Z^4$ as the starting point. Conditionally on $T$ we assign independently to each edge of $T$ a random variable in $\Z^4$ according to $\theta$. Then we can uniquely define a function $\mathcal{S}_T: T\rightarrow \Z^4$, such that, for every vertex $v \in T$ (we also use $T$ for the set of all vertices of the tree $T$), $\mathcal{S}_T(v)-a$ is the sum of the variables of all edges belonging to the unique simple path from the root $o$ to the vertex $u$ (hence $\mathcal{S}_T(o)=a$). A plane tree $T$ together with this random function $\mathcal{S}_T$ is called $T$-indexed random walk starting from $a$. When $T$ is a $\mu$-GW tree, an adjoint $\mu$-GW tree, and an infinite $\mu$-GW tree respectively, we simply call the tree-indexed random walk a \textbf{snake}, an \textbf{adjoint snake} and an \textbf{infinite snake} respectively. We write $\Snake_x$, $\Snake'_x$ and $\Snake^{\infty}_x$ for a snake, an adjoint snake, and an infinite snake, respectively, starting from $x\in \Z^4$. Note that a snake is just the branching random walk with offspring distribution $\mu$ and jump distribution $\theta$. We also need to introduce the \textbf{reversed infinite snake} starting from $x$, $\Snake^{-}_x$, which is constructed in the same way as $\Snake^{\infty}_x$ except that the variables assigned to the edges in the spine are now due to not $\theta$ but the reverse distribution $\Rtheta$ of $\theta$ (i.e. $\Rtheta(x)\DeFine\theta(-x)$ for $x\in \Z^4$) and similarly the \textbf{invariant snake} starting from $x$, $\Snake^{I}_x$, which is constructed by using the invariant $\mu$-GW tree as the random tree $T$ and using $\Rtheta$ for all edges of the spine of $T$ and $\theta$ for all other edges. For an infinite snake (or reversed infinite snake, invariant snake), the random walk indexed by its spine, called its backbone, is just a random walk with jump distribution $\theta$ (or $\Rtheta$).  Note that all snakes here certainly depend on $\mu$ and $\theta$. Since $\mu$ and $\theta$ are fixed throughout this paper, we omit their dependence in the notation.

Notation: we write $\pS_A(x)$ ($\rS_A(x)$) for the probability that $\Snake_x$ ($\Snake'_x$) visits $A$.

\subsection{Random walk with killing.} We will use the tools of random walk with killing. Suppose that when the random walk is currently at position $x\in\Z^4$, then it is killed, i.e. jumps to a 'cemetery' state $\delta$, with probability $\KRW(x)$, where $\KRW:\Z^4\rightarrow [0,1]$ is a given function. In other words, the random walk with killing rate $\KRW(x)$ (and jump distribution $\theta$) is a Markov chain $\{X_n:n\geq0\}$ on $\Z^4\cup\{\delta\}$ with transition probabilities $p(\cdot,\cdot)$ given by: for $x,y\in\Z^4$,
\begin{equation*}
p(x,\delta)=\KRW(x), \quad p(\delta, \delta)=1, \quad p(x,y)=(1-\KRW(x))\theta(y-x).
\end{equation*}
For any path $\gamma:\{0,\dots, n\}\rightarrow \Z^4$ with length $n$, its probability weight $\BRW(\gamma)$ is defined to be the probability that the path consisting of the first $n$ steps for the random walk with killing starting from $\gamma(0)$ is $\gamma$. Equivalently,
\begin{equation}\label{def-b}
\BRW(\gamma)=\prod_{i=0}^{|\gamma|-1}(1-\KRW(\gamma(i)))\theta(\gamma(i+1)-\gamma(i))
=\SRW(\gamma)\prod_{i=0}^{|\gamma|-1}(1-\KRW(\gamma(i))),
\end{equation}
where $\SRW(\gamma)=\prod_{i=0}^{|\gamma|-1}\theta(\gamma(i+1)-\gamma(i))$ is the probability weight of $\gamma$ corresponding to the random walk with jump distribution $\theta$. In this paper, the killing function is always of the form $\KRW(x)=\KRW_A(x)=\rS_A(x)$ for some finite subset $A\subseteq\Z^d$ (unless otherwise specified).

Now we can define the corresponding Green function for $x,y\in \Z^4$:
\begin{equation*}
G_A(x,y)=\sum_{n=0}^{\infty}P(S^{\KRW}_x(n)=y)=\sum_{\gamma:x\rightarrow y}\BRW(\gamma).
\end{equation*}
where $S^{\KRW}_x=(S^{\KRW}_x(n))_{n\in \N}$ is the random walk (with jump distribution $\theta$) starting from $x$, with killing function $\KRW_A$, and the last sum is over all paths from $x$ to $y$. For $x\in \Z^4, A,K\subset \Z^4$, we write $G_A(x,K)$ for $\sum_{y\in K}G_A(x,y)$.

For any $B\subset \Z^4$ and $x, y\in \Z^4$, define the harmonic measure:
$$
\Hm^{B}_A(x,y)= \sum_{\gamma:x\rightarrow y, \gamma\subseteq B}\BRW(\gamma).
$$

When the killing function $\KRW \equiv 0$, the random walk with this killing is just random walk without killing and we write $\Hm^{B}(x,y)$ for this case.

We will repeatedly use the following First-Visit Lemma. The idea is to decompose a path according to the first or last visit of a set.
\begin{lemma} \label{bd-1}
For any $B\subseteq \Z^4$ and $a\in B, b\notin B$, we have:
$$
G_A(a,b)=\sum_{z\in B^c}\Hm^{B}_A(a,z)G_\KRW(z,b)=\sum_{z\in B}G_A(a,z)\Hm^{B^c}_A(z,b);
$$
$$
G_A(b,a)=\sum_{z\in B}\Hm^{B^c}_A(b,z)G_\KRW(z,a)=\sum_{z\in  B^c}G_A(b,z)\Hm^{B}_A(z,a).
$$
\end{lemma}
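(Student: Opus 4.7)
The plan is to prove all four equalities by one elementary strategy: canonical path decomposition combined with multiplicativity of $\BRW$. From the product formula \eqref{def-b}, whenever $\widehat{\gamma_1}=\gamma_2(0)$ one has
$$
\BRW(\gamma_1\circ\gamma_2)=\BRW(\gamma_1)\cdot\BRW(\gamma_2),
$$
since the $|\gamma_1|+|\gamma_2|$ factors of the form $(1-\KRW(\cdot))\theta(\cdot)$ in $\BRW(\gamma_1\circ\gamma_2)$ partition into the two groups appearing on the right. This is the only nontrivial ingredient.

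For the first identity I would decompose each path $\gamma:a\to b$ at its \emph{first exit from $B$}: since $a\in B$ and $b\notin B$, there is a smallest index $k\ge 1$ with $\gamma(k)\in B^c$, and I set $z=\gamma(k)$. The initial segment $\gamma_1=(\gamma(0),\dots,\gamma(k))$ then satisfies $\gamma_1\subseteq B$ in the paper's convention (interior vertices lie in $B$), while $\gamma_2=(\gamma(k),\dots,\gamma(|\gamma|))$ is an arbitrary path from $z$ to $b$. Summing $\BRW(\gamma)=\BRW(\gamma_1)\BRW(\gamma_2)$ over $z\in B^c$ and over all admissible $\gamma_1,\gamma_2$ yields
$$
G_A(a,b)=\sum_{z\in B^c}\Hm^{B}_A(a,z)\,G_A(z,b).
$$
For the second equality I instead decompose at the \emph{last visit to $B$}: take the largest $k$ with $\gamma(k)\in B$, so $\gamma_1:a\to z$ is arbitrary while $\gamma_2\subseteq B^c$. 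The two identities for $G_A(b,a)$ are proved identically, decomposing at the first visit to $B$ (so $\gamma_1\subseteq B^c$) and at the last exit from $B^c$ (so $\gamma_2\subseteq B$) respectively; the fact that $a\in B$, $b\in B^c$ guarantees that each of these splitting indices is well-defined.

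The only care needed is bookkeeping at the splitting vertex $z$ — checking that the $(1-\KRW(z))\theta(\cdot)$ factor is accounted for exactly once and that the convention $\gamma\subseteq B$ applies to interior vertices but not endpoints. Both are transparent from \eqref{def-b}, so the lemma presents no analytic obstacle; it is essentially the strong Markov property for the killed walk, repackaged as a combinatorial identity over paths.
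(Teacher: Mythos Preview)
The paper states this lemma without proof (it is invoked as a standard ``First-Visit Lemma''), so there is no argument to compare against. Your path-decomposition proof is correct and is precisely the intended reasoning: the multiplicativity $\BRW(\gamma_1\circ\gamma_2)=\BRW(\gamma_1)\BRW(\gamma_2)$ follows directly from \eqref{def-b}, and the four decompositions (first exit from $B$, last visit to $B$, first entry to $B$, last visit to $B^c$) each give a bijection between paths $\gamma$ and pairs $(\gamma_1,\gamma_2)$, with the paper's convention that $\gamma\subseteq B$ constrains only the interior vertices making the bookkeeping exactly as you describe.
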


Since our jump distribution $\theta$ may be unbounded, we need the following Overshoot Lemma:
\begin{lemma}\label{overshoot}
For any $r,s>1$ and $p\in \N^+$, there exists some $C(p)>0$, such that, for any $a\in \Ball(r)$, we have:
\begin{equation}\label{os}
\sum_{y\in(\Ball (r+s))^c} \Hm^{B}(a,y)\leq C(p)\frac{r^2}{s^{p}},\quad
\sum_{y\in(\Ball (r+s))^c} \Hm^{B}(y,a)\leq C(p)\frac{r^2}{s^{p}}.
\end{equation}
\end{lemma}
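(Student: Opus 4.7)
The plan is to combine two standard inputs. First, by Markov's inequality applied to $\exp(\lambda |u|)$ for $u\sim\theta$, the exponential-moment assumption yields the polynomial tail $\sum_{|u|\ge t}\theta(u) \le C(p)\, t^{-p}$ for every $p \in \N^+$ and $t \ge 1$. Second, by optional stopping applied to the martingale $\|S_n\|^2 - c_Q n$ (with $c_Q$ depending only on $Q$, and the overshoot at exit controlled by the same tail bound), the expected exit time from $\Ball(R)$ starting from any $a \in \Ball(R)$ satisfies $E_a[\tau_{\Ball(R)}] \le C R^2$.

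To prove the first inequality, I would decompose every admissible path $\gamma : a\to y$ contributing to $\Hm^B(a,y)$ (that is, $y\notin\Ball(r+s)$ and interior in $B$) at its penultimate vertex $z:=\gamma(|\gamma|-1)$, and split according to whether $z$ lies in $\Ball(r+s/2)$. In the first regime ($z\in\Ball(r+s/2)$), the final jump satisfies $\|y-z\|\ge s/2$, so summing $\theta(y-z)$ over $y\in\Ball(r+s)^c$ contributes a factor $\le C(p)(s/2)^{-p}$, while the prefix sum $\sum_{z\in B\cap\Ball(r+s/2)}\Hm^B(a,z)$ is bounded by the expected occupation time of $\Ball(r+s/2)$ starting from $a$, which is $\le C(r+s)^2$ by the martingale input above. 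In the second regime ($z\notin\Ball(r+s/2)$), the sub-path from $a$ to $z$ is itself a contributor to the overshoot problem, now at the smaller gap $s/2$, and I would iterate the same decomposition dyadically across nested annular shells.

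Combining the two regimes and applying the elementary inequality $(r+s)^2 \le C\,r^2 s^2$ for $r,s\ge 1$, the resulting estimate is of order $(r+s)^2/s^{p+2}$ (after running the tail bound with exponent $p+2$), which is $\le C'(p)\,r^2/s^p$, giving \eqref{os}. The second inequality in \eqref{os} follows by running the same argument on the time-reversed walk, whose jump distribution $\Rtheta$ carries the same exponential moment; the roles of the starting and ending points are exchanged but the geometry is identical.

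The hardest part is the iteration in the second regime: I must verify that the dyadic induction across successive annular shells produces a geometric series with summable common ratio and no extraneous accumulated factor (in particular, no logarithmic loss from summing over $O(\log s)$ scales). This is standard given polynomial tails at arbitrarily high $p$, but it does require careful bookkeeping to make sure the inductive hypothesis closes with the correct $r^2/s^p$ form rather than a weaker $(r+s)^2/s^p$ form.
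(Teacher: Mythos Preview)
The paper does not actually prove this lemma; it is stated in the preliminaries as a known input and used later without justification. So there is no ``paper's proof'' to compare against, and your proposal supplies an argument the paper omits.

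Your two inputs (polynomial tails from exponential moments, and $E_a[\tau_{\Ball(R)}]\le CR^2$ via optional stopping) are exactly right, and the last-step decomposition is the natural approach. However, you are over-engineering the argument. In the lemma $B=\Ball(r)$, and by the paper's convention $\gamma\subseteq B$ means every \emph{intermediate} vertex of $\gamma$ lies in $B$. Hence for any contributing path $\gamma:a\to y$ with $|\gamma|\ge 2$, the penultimate vertex $z=\gamma(|\gamma|-1)$ already lies in $\Ball(r)\subseteq\Ball(r+s/2)$; your ``second regime'' is empty and the dyadic iteration never fires. The whole bound is then one line:
\[
\sum_{y\in(\Ball(r+s))^c}\Hm^{B}(a,y)
\;\le\;\Big(\sum_{z\in\Ball(r)}G^{\Ball(r)}(a,z)\Big)\cdot\sup_{z\in\Ball(r)}\!\sum_{\|u\|>s}\theta(u)
\;\le\;E_a[\tau_{\Ball(r)}]\cdot C(p)s^{-p}\;\le\;C(p)\,\frac{r^2}{s^{p}}.
\]
In particular you do not need the detour through $(r+s)^2\le Cr^2s^2$ or the tail exponent $p+2$. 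The second inequality in \eqref{os} is handled, as you say, by decomposing at the \emph{first} step instead (equivalently, by time reversal), since the first intermediate vertex $\gamma(1)$ must lie in $\Ball(r)$ and hence the initial jump from $y$ has size $>s$.

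So: the strategy is correct, but the ``hardest part'' you flag is a non-issue once you observe that the penultimate vertex is forced into $\Ball(r)$ by the definition of $\gamma\subseteq B$.
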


\subsection{Some facts about random walk and the Green function.}
For $x\in\Z^4$, we write $S_x=(S_x(n))_{n\in\N}$ for the random walk with jump distribution $\theta$ starting from $S_x(0)=x$. Recall that the norm $\| \cdot \|$ corresponding to $\theta$ for every $x\in \Z^4$ is defined to be $\|x\|=\sqrt{x\cdot Q^{-1}x}/2$, where $Q$ is the covariance matrix of $\theta$. Note that $\|x\|\asymp |x|$, especially, there exists $c>1$, such that $\Ball(c^{-1}n)\subseteq\BallE(n)\subseteq\Ball(cn)$, for any $n\geq1$. The Green function $g(x,y)$ is defined to be:
$$
g(x,y)=\sum_{n=0}^{\infty}P(S_x(n)=y)=\sum_{\gamma:x\rightarrow y}\SRW(\gamma).
$$
We write $g(x)$ for $g(0,x)$.

Our assumptions about the jump distribution $\theta$ guarantee the standard estimate for the Green function (see e.g. Theorem 4.3.5 in \cite{LL10}):
\begin{equation}\label{green}
g(x)\sim a_4\| x \|^{-2};
\end{equation}
where $a_4=1/(8\pi^2\sqrt{\det Q})$ with $Q$ being the covariant matrix of $\theta$.

By LCLT, one can get the following lemma.
\begin{lemma}\label{pro-g2}
\begin{equation}
\lim_{n\rightarrow\infty}sup_{x\in\Z^4}\left(\sum_{\gamma:0\rightarrow x,|\gamma|\geq n|x|^2}
\SRW(\gamma)/g(x)\right)=0.
\end{equation}
\end{lemma}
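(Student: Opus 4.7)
The plan is to convert the statement to a question about transition probabilities: writing $p_k(x) := P(S_0(k) = x)$, the identities $\sum_{\gamma: 0 \to x, |\gamma| \geq n|x|^2} \SRW(\gamma) = \sum_{k \geq n|x|^2} p_k(x)$ and $g(x) = \sum_{k \geq 0} p_k(x)$ reduce the lemma to showing $\sup_x \bigl(\sum_{k \geq n|x|^2} p_k(x)\bigr)/g(x) \to 0$.

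The first step is to obtain a uniform upper bound $p_k(x) \leq C/k^2$ valid for all $x \in \Z^4$ and $k \geq 1$. For $k$ beyond a constant threshold $k_0$ this follows from the local CLT (e.g.\ Theorem 2.1.1 of \cite{LL10}), which under our assumptions on $\theta$ gives the Gaussian-type bound $p_k(x) \leq C k^{-2}$ uniformly in $x$; for $k \leq k_0$ the trivial bound $p_k(x) \leq 1$ is absorbed by enlarging $C$. Summing the geometric-like tail, $\sum_{k \geq n|x|^2} p_k(x) \leq C'/(n|x|^2)$.

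The second step is to produce a matching lower bound on $g(x)$. From the asymptotic \eqref{green}, $g(x) \sim a_4 \|x\|^{-2}$, together with $\|x\| \asymp |x|$ (and the convention $|0| = 1/2$), we obtain $g(x) \geq c|x|^{-2}$ for $|x|$ large. For $x$ in any bounded region the same inequality still holds, because $g(x) > 0$ on the finite set of such $x$ by irreducibility, and $|x|^{-2}$ is bounded there. Dividing then yields
\[
\frac{1}{g(x)} \sum_{k \geq n|x|^2} p_k(x) \;\leq\; \frac{C'/(n|x|^2)}{c\,|x|^{-2}} \;=\; \frac{C''}{n},
\]
uniformly in $x$, and the conclusion follows by letting $n \to \infty$.

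The proof is essentially bookkeeping around two classical facts (LCLT and $g(x) \asymp \|x\|^{-2}$), so there is no genuine technical obstacle. The only point worth attention is the interplay between the lower bound $|x| \geq 1/2$ and the threshold $n|x|^2$: since this threshold is always at least $n/4$, for large $n$ the entire range $k \geq n|x|^2$ lies in the LCLT regime, removing any concern about small-$k$ behavior of $p_k(x)$.
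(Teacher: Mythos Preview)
Your proposal is correct and matches the paper's approach: the paper simply states that the lemma follows from the LCLT without giving details, and your argument---bounding $p_k(x)\le Ck^{-2}$ via LCLT, summing the tail, and comparing with $g(x)\asymp|x|^{-2}$---is precisely the standard way to unpack that one-line justification.
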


\subsection{Some results in the previous papers.}
We first recall some notations from \cite{Z161}. Fix a finite subset $K\ssubset \Z^4$. For any $x\in \Z^4$, define the position-dependent distribution $\mu_x=\mu_{x,K}$ on $\N$ by:
\begin{equation}\label{mu-x}
\mu_x(m)=\left\{
\begin{array}{ll}
\sum_{l\geq0}\mu(l+m+1)(\rp(x))^l/(1-\rS_K(x)),& \text{when }x\notin K;\\
\mu(m),& \text{when }x\in K; \\
\end{array}
\right.
\end{equation}
where $\rp(x)$ is the probability that a snake starting from $x$ does not visit $K$ conditioned on the initial particle having only one child. Write $N(x)=N_K(x)=E\mu_x$.
Note that
$$
\lim_{x\rightarrow \infty}N(x)=E\tilde{\mu}=\frac{\sigma^2}{2}.
$$

When a snake $\Snake_x=(T,\mathcal{S}_T)$ visits $K$, since $T$ is an ordered tree, we have the unique first vertex, denoted by $\tau_K$, in $\{v\in T: \mathcal{S}_T(v)\in K\}$ due to the default order. We say $\mathcal{S}_T(\tau_K)$ is the visiting point or $\Snake_x$ visits $K$ at $\mathcal{S}_T(\tau_K)$. Let $(v_0,v_1,\dots,v_k)$ be the unique simple path in $T$ from the root $o$ to $\tau_K$ and say $\Snake_x$ visits $K$ via $\gamma$. Let $\widetilde{b}_i$ be the number of the younger  brothers of $v_i$, for $i=1,\dots,k$.
The key observation is the following proposition (see Sec 5 and Sec 9 in \cite{Z161}):
\begin{prop}\label{KKey}
For any $\gamma=(\gamma(0),\dots,\gamma(k))\subseteq K^c$ starting from $x$, ending at $K$, we have:
\begin{equation}\label{key}
P(\Snake_x \text{ visits }K\text{ via }\gamma)=\BRW(\gamma);
\end{equation}
\begin{equation}\label{p2}
\pS_K(x)=\sum_{\gamma:x\rightarrow K}\BRW(\gamma)=G_K(x,K);
\end{equation}
Furthermore, conditioned on the event $\Snake_x$ visits $K$ via $\gamma$, $\widetilde{b}_i$($i=1,\dots,k$) are independent, with the following distribution:
$$
\widetilde{b}_i \stackrel{d}{=}\mu_{\gamma(i-1)}.
$$
\end{prop}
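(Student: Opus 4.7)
My plan is to condition on the sequence $v_0,\dots,v_k$ of tree-vertices that constitute the first-visit path and expand according to the branching structure. Suppose $\gamma=(\gamma(0),\dots,\gamma(k))\subseteq K^c$ with $\gamma(k)\in K$. For $\Snake_x$ to visit $K$ first via $\gamma$, the jump from $v_{i-1}$ to $v_i$ must equal $\gamma(i)-\gamma(i-1)$ with probability weight $\theta(\gamma(i)-\gamma(i-1))$, and every elder sibling of $v_i$ must be the root of an independent $\mu$-GW subtree whose snake avoids $K$; younger siblings and the subtree hanging off $v_k$ are unconstrained. Since each elder sibling sits at a random position $\gamma(i-1)+J$ with $J\sim\theta$, the probability that one such elder sibling's snake avoids $K$ is $\sum_y\theta(y-\gamma(i-1))(1-\pS_K(y))=\widetilde{r}(\gamma(i-1))$.

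The main calculation is then the sum over the number of elder siblings at each level. Writing $\widetilde{a}_i$ for the count of elder siblings of $v_i$, one has
\begin{equation*}
\sum_{\widetilde{a}_i\geq 0}\mu(\widetilde{a}_i+\widetilde{b}_i+1)\,\widetilde{r}(\gamma(i-1))^{\widetilde{a}_i}=\mu_{\gamma(i-1)}(\widetilde{b}_i)\cdot(1-\rS_K(\gamma(i-1))),
\end{equation*}
which is just \eqref{mu-x} read from right to left. Multiplying over $i=1,\dots,k$ and then summing freely over $(\widetilde{b}_1,\dots,\widetilde{b}_k)\in\N^k$ (which contributes $1$ because each $\mu_{\gamma(i-1)}$ is a probability distribution) gives
\begin{equation*}
P(\Snake_x\text{ visits }K\text{ via }\gamma)=\prod_{i=1}^{k}\theta(\gamma(i)-\gamma(i-1))(1-\rS_K(\gamma(i-1)))=\BRW(\gamma),
\end{equation*}
which is \eqref{key}. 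The conditional statement about $\widetilde{b}_i$ drops out of the same identity: dividing by the total shows that $(\widetilde{b}_1,\dots,\widetilde{b}_k)$ has law $\prod_i \mu_{\gamma(i-1)}$, whence independence with the stated marginals.

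For \eqref{p2}, summing \eqref{key} over all admissible $\gamma$ yields $\pS_K(x)=\sum_{\gamma:x\to K,\,\gamma\subseteq K^c}\BRW(\gamma)$. To identify this with $G_K(x,K)$, I would exploit that the killing function $\KRW=\rS_K$ equals $1$ on $K$: any path contributing to $G_K(x,y)$ with $y\in K$ that makes a strict interior visit to $K$ picks up a vanishing factor $(1-\rS_K)=0$ in its $\BRW$-weight, so $G_K(x,y)=\sum_{\gamma:x\to y,\,\gamma\subseteq K^c}\BRW(\gamma)$ for every $y\in K$, and summing over $y\in K$ finishes the proof. The only conceptual hurdle is the appearance of $\widetilde{r}$ rather than $1-\pS_K$ as the per-sibling escape probability; this is forced because the elder sibling is the root of a full $\mu$-GW subtree at the random location $\gamma(i-1)+J$, and it is the integration against $\theta$ that delivers $\widetilde{r}(\gamma(i-1))$ and lets the elder-sibling sum collapse cleanly into the killing factor $1-\rS_K(\gamma(i-1))$. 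The degenerate case $x\in K$ is handled by $k=0$, $\gamma=(x)$, with both sides equal to $1$.
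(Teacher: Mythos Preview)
Your argument is correct and is precisely the decomposition one expects: condition on the spine $v_0,\dots,v_k$ to $\tau_K$, require each elder sibling's subtree to avoid $K$, sum the elder-sibling count against $\widetilde r(\gamma(i-1))^{\widetilde a_i}$, and recognize the result as $\mu_{\gamma(i-1)}(\widetilde b_i)(1-\rS_K(\gamma(i-1)))$ via \eqref{mu-x}. The identification of $\sum_y\theta(y-\gamma(i-1))(1-\pS_K(y))$ with $\widetilde r(\gamma(i-1))$ is exactly right, and your observation that $\rS_K\equiv 1$ on $K$ kills any path in $G_K(x,K)$ that enters $K$ before its endpoint is the clean way to get \eqref{p2}.

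Note, however, that the present paper does not actually prove Proposition~\ref{KKey}: it is quoted verbatim from Sections~5 and~9 of \cite{Z161}, where the argument is carried out (in arbitrary dimension). Your write-up recovers that argument faithfully; there is nothing to compare beyond saying your proof is the same spine-decomposition used there.
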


For $\rS_K(x)$, we have (see (8.6) in \cite{Z161})
\begin{equation}\label{r-p}
\rS_K(x)\asymp \pS_K(x).
\end{equation}

Note that in \cite{Z161} we address the supercritical dimensions ($d\geq5$). However, for the results above, we do need the assumption $d\geq5$. They hold for any $d\in \N^+$.

We also need the following result from \cite{Z15}
\begin{equation}\label{previous}
\pS_{\{0\}}(x)\asymp (\|x\|^2\log \|x\|)^{-1}.
\end{equation}

\section{The visiting probability.}
The main goal of this section is to prove Theorem \ref{MT4}. In this section and the next, we fix a finite nonempty subset $K\ssubset \Z^4$ and therefore the constants may also depends on $K$.

The first step is to construct the following estimate of the Green function:
\begin{lemma}\label{con-G-local}
For any $\alpha\in (0,1/2)$,  we have:
\begin{equation}\label{G_local}
\lim_{x,y\rightarrow \infty:\|x\|/(\log\|x\|)^\alpha\leq\|y\|\leq\|x\|\cdot(\log\|x\|)^\alpha}
\frac{G_K(x,y)}{g(x,y)}=1.
\end{equation}
\end{lemma}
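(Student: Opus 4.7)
My plan is to prove the two bounds separately. The upper bound $G_K(x,y)\le g(x,y)$ is immediate from \eqref{def-b}, so the real work is the lower bound $g(x,y)-G_K(x,y)=o(g(x,y))$ in the stated limit. Coupling the killed and unkilled walks by testing for killing at each vertex via independent $\mathrm{Bernoulli}(\KRW(\cdot))$ variables, applying the union bound $1-\prod_i(1-\KRW(\gamma(i)))\le\sum_i\KRW(\gamma(i))$ to each path $\gamma:x\to y$, and splitting $\gamma$ at its $i$-th vertex, one obtains the convolution estimate
\begin{equation*}
g(x,y)-G_K(x,y)\le\sum_{w\in\Z^4}g(x,w)\,\rS_K(w)\,g(w,y).
\end{equation*}

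Next I would establish a pointwise bound on the killing rate. Since $K$ is finite, the union bound $\pS_K(w)\le\sum_{a\in K}\pS_{\{a\}}(w)$ together with translation invariance $\pS_{\{a\}}(w)=\pS_{\{0\}}(w-a)$ and \eqref{previous} gives $\pS_K(w)\preceq(\|w\|^2\log\|w\|)^{-1}$ for $\|w\|$ large, and then \eqref{r-p} yields the same bound for $\rS_K$ (and trivially $\rS_K\le 1$). Combined with $g(x,w)\preceq\|x-w\|^{-2}$ from \eqref{green}, the problem reduces to estimating
\begin{equation*}
\Sigma\preceq\sum_{w\in\Z^4}\frac{1}{\|x-w\|^2\cdot\|w\|^2\log\|w\|\cdot\|y-w\|^2}.
\end{equation*}
Set $R=\|x\|$ and $s=\|x-y\|$; the hypothesis yields $\|y\|\asymp R$ up to $(\log R)^\alpha$, and hence $s\le 2R(\log R)^\alpha$.

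I would estimate $\Sigma$ by partitioning $\Z^4$ into three zones. In the inner zone $\|w\|\le R/2$, one has $\|x-w\|\asymp\|y-w\|\asymp R$ while $\sum_{\|w\|\le R/2}\rS_K(w)\asymp R^2/\log R$ by a radial summation, contributing $\preceq(R^2\log R)^{-1}$. In the far zone $\|w\|\ge 2R(\log R)^\alpha$, all three distances are $\asymp\|w\|$ and the radial integral $\int r^{-3}/\log r\,dr$ again gives $\preceq(R^2\log R)^{-1}$. The delicate middle zone $R/2<\|w\|<2R(\log R)^\alpha$ has $\rS_K(w)\preceq(R^2\log R)^{-1}$ uniformly; after the change of variables $u=w-x$, the remaining factor is controlled by the truncated $\R^4$ integral $\int_{|u|\le CR(\log R)^\alpha}|u|^{-2}|u-(y-x)|^{-2}du\asymp 1+\llog(R/s)$. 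In total, $\Sigma\preceq(1+\llog(R/s))/(R^2\log R)$.

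Finally, using $g(x,y)\succeq\max(s,1)^{-2}$ from \eqref{green}, the ratio satisfies
\begin{equation*}
\frac{g(x,y)-G_K(x,y)}{g(x,y)}\preceq\frac{\max(s,1)^2\,(1+\llog(R/s))}{R^2\log R}.
\end{equation*}
The worst case is $s\asymp R(\log R)^\gamma$ with $\gamma\in[0,\alpha]$, for which the ratio is of order $(\log R)^{2\gamma-1}$ up to $\log\log$ corrections, and this vanishes iff $\gamma<1/2$. This is where the hypothesis $\alpha<1/2$ is used, and it is also the main obstacle: the logarithmic divergence of $g\ast g$ in the critical dimension $d=4$ produces an extra $\log(R/s)$ factor whose competition with the $\log R$ decay of the killing rate pins the threshold at $\alpha=1/2$; at $\alpha=1/2$ the ratio would be of order one rather than $o(1)$.
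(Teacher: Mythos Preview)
Your proposal is correct and takes a genuinely different route from the paper's. The paper argues on path space: fixing $\beta,\epsilon>0$ with $\epsilon+2\alpha+2\beta<1$, it discards paths longer than $(\log\|x\|)^\epsilon\|x-y\|^2$ (negligible by Lemma~\ref{pro-g2}) and paths that enter the small ball $\Ball(\|x\|/(\log\|x\|)^\beta)$ (negligible by a hitting-probability estimate), and then observes that on the surviving paths each factor $1-\KRW(\gamma(i))$ exceeds $1-c/(r^2\log r)$ while the length is at most $(\log\|x\|)^\epsilon\|x-y\|^2$, so the product $\prod_i(1-\KRW(\gamma(i)))\to 1$ uniformly. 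You instead collapse the path structure into the single first-moment inequality $g(x,y)-G_K(x,y)\le\sum_w g(x,w)\,\rS_K(w)\,g(w,y)$ and control this triple sum by a zone decomposition in $\Z^4$; your middle-zone computation is essentially a weighted variant of the convolution estimate in Lemma~\ref{conv-3}. Your route dispenses with the path-length lemma and yields an explicit bound of order $(\log R)^{2\alpha-1}$ on the error ratio, at the price of a longer convolution calculation. One minor gap: your inner-zone claim that $\|y-w\|\asymp R$ when $\|w\|\le R/2$ tacitly needs $\|y\|\ge R=\|x\|$; you should declare the harmless reduction $\|y\|\ge\|x\|$ up front (the paper does the same), or else take the inner zone to be $\|w\|\le\min(\|x\|,\|y\|)/2$.
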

\begin{remark}
In supercritical dimensions, we have $G_K(x,y)\sim g(x,y)$ (see Lemma 6.1 in \cite{Z161}). In the critical dimension, this holds only when $x,y$ are not too far away from each other, compared with their norms. We will give a more precise asymptotic behavior of $G_K$ in next section.
\end{remark}

\begin{proof}[Proof of Lemma \ref{con-G-local}]
We use the same idea in the proof for a similar form in the supercritical dimension(see Section 6 of \cite{Z161}). Since $\alpha<1/2$, we can pick up some $\beta,\epsilon>0$, such that $\epsilon+2\alpha+2\beta<1$. Without loss of generality, we assume $\|y\|\geq \|x\|$. Let $r=\|x\|/\log^\beta \|x\|$ and
$$
\Gamma_1=\{\gamma:x\rightarrow y|\;|\gamma|\geq (\log \|x\|)^\epsilon\|x-y\|^2\};
$$
$$
\Gamma_2=\{\gamma:x\rightarrow y|\gamma \text{ visits }\Ball(r)\}.
$$
We just need to check: (when $\|x\|\rightarrow\infty$)
$$
\sum_{\gamma\in\Gamma_1}\SRW(\gamma)/g(x,y)\rightarrow0;\quad \sum_{\gamma\in\Gamma_2}\SRW(\gamma)/g(x,y)\rightarrow0;
$$
$$
\BRW(\gamma)/\SRW(\gamma)\rightarrow 1, \text{ for any }\gamma:x\rightarrow y, \notin \Gamma_1\cup\Gamma_2.
$$
The first one follows from Lemma \ref{pro-g2}. The second one can be obtained by: (let $B=\Ball(r)$):
\begin{align*}
\sum_{\gamma\in \Gamma_2}\SRW(\gamma)=&\sum_{a\in B}\Hm^{B^c}(x,a)
g(a,y)\asymp \sum_{a\in B}\Hm^{B^c}(x,a)\|y\|^{-2}\\
=&P(S_x \text{ visits } B)\cdot \|y\|^{-2}\asymp  (r/\|x\|)^{2} \|y\|^{-2}\\
\preceq &  (\log \|x\|)^{2\beta}\|x-y\|^{-2} \ll g(x,y).\\
\end{align*}
Note that the estimate of $P(S_x \text{ visits } \Ball(r))\asymp (r/\|x\|)^{2}$ is standard, and for the second last inequality we use $\|y\|\geq(\|x\|+\|y\|)/2\succeq\|x-y\|$.

For the third one, note that in the critical dimension $d=4$, by \eqref{previous} and \eqref{r-p}, the killing function $\KRW(z)=\rS_K(z)\asymp\pS_K(z)\preceq1/(\|z\|^2\log\|z\|)$. Hence, we have: \\
for any $\gamma:x\rightarrow y, \notin \Gamma_1\cup\Gamma_2$,
\begin{align*}
\BRW(\gamma)/\SRW(\gamma)=&\prod_{i=0}^{|\gamma|-1}\left(1-\KRW(\gamma(i))\right)
\geq(1-c/(r^{2}\log r))^{|\gamma|}
\geq 1-c|\gamma|/(r^{2}\log r)\\
\geq &1-c(\log \|x\|)^\epsilon\|y\|^2/((\|x\|/\log^\beta \|x\|)^2(\log \|x\|)) \\
\geq &1-c(\log \|x\|)^\epsilon(\|x\|\log^\alpha\|x\|)^2/((\|x\|/\log^\beta \|x\|)^2(\log \|x\|)) \\
\geq &1-c(\log\|x\|)^{\epsilon+2\alpha+2\beta}/\log\|x\|\rightarrow 1.
\end{align*}
\end{proof}

Let $N$ be the number of times of visiting $K$. We need to estimate $E(N|\Snake_x \text{ visits }K \\
\text{ via }\gamma)$. For any finite path $\gamma$, define
\begin{equation}\label{k1}
\Num(\gamma)=\sum_{i=0}^{|\gamma|}N(\gamma(i))g(\gamma(i),K);\quad
\Num^-(\gamma)=\sum_{i=0}^{|\gamma|-1}N(\gamma(i))g(\gamma(i),K).
\end{equation}

By Proposition \ref{KKey}, we have:
$$
E(N|\Snake_x \text{ visits }K \text{ via }\gamma))=\Num(\gamma).
$$

Hence, we have:
\begin{equation}\label{eq-0}
\sum_{\gamma:x\rightarrow K}\BRW(\gamma)\Num(\gamma)=g(x,K)\sim a_4|K|\|x\|^{-2}.
\end{equation}

The main step is to control the sum of the escape probabilities:
\begin{prop}\label{thm-es}
\begin{equation}\label{thm_es}
\sum_{\gamma:\Ball(2n)\setminus \Ball(n)\rightarrow:K,\gamma\subseteq \Ball(n)}\BRW(\gamma)
\sim \frac{4\pi^2\sqrt{\det Q}}{\sigma^2}\frac{1}{\log n}.
\end{equation}
\end{prop}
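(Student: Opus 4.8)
The plan is to evaluate the left-hand side by a double sum/partition strategy: every path $\gamma$ from the annulus $\Ball(2n)\setminus\Ball(n)$ into $K$, staying in $\Ball(n)^c$ until the end, must first reach some intermediate scale, and the contribution is governed by the Green function $G_K$. First I would rewrite
$$
\sum_{\gamma:\Ball(2n)\setminus\Ball(n)\to K,\;\gamma\subseteq\Ball(n)^c}\BRW(\gamma)
=\sum_{x\in\Ball(2n)\setminus\Ball(n)}\nu_n(x),
$$
where $\nu_n(x)$ is the starting distribution induced by the relevant snake/random-walk setup; more usefully, decompose via the First-Visit Lemma \ref{bd-1} at the boundary of $\Ball(n)$: any such $\gamma$ reaches $\Ball(n)$ for the first time at some $z$ with $\|z\|\approx n$, and afterwards continues to $K$ inside $\Ball(n)$. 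So the sum becomes essentially $\sum_z (\text{entrance mass at }z)\cdot G_K(z,K)$. The key analytic input is that for $z$ with $\|z\|\asymp n$ one has, by Lemma \ref{con-G-local} together with \eqref{green}, $G_K(z,K)\approx |K|\cdot a_4\|z\|^{-2}$ — but this is exactly where the logarithm must come from, so the naive version is not yet enough.

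The mechanism producing $1/\log n$ is the same as in \eqref{previous}: the killing function $\KRW(z)=\rS_K(z)\asymp (\|z\|^2\log\|z\|)^{-1}$ is just barely non-summable against the $4$-dimensional Green function, so $G_K(x,K)$ decays like $(\|x\|^2\log\|x\|)^{-1}$ rather than $\|x\|^{-2}$ once $\|x\|$ is large compared with $\diam(K)$ but we are looking at escape from scale $n$ back to $K$. Concretely, I would set up a renewal-type recursion across dyadic scales $\Ball(2^k)$, $k=0,1,\dots,\log_2 n$: let $q_k$ denote the probability weight of reaching $K$ from $\partial\Ball(2^{k+1})$ while staying outside $\Ball(2^k)$ until the end (the quantity in \eqref{thm_es} with $n=2^k$), and show $q_k$ satisfies, up to $o(q_k)$ errors, a relation of the form $q_{k}^{-1}-q_{k-1}^{-1}\to \frac{\sigma^2}{4\pi^2\sqrt{\det Q}}\cdot\frac{1}{\log 2}$ coming from the fact that from scale $2^k$ the walk, between successive returns to smaller scales, picks up an independent geometric number of excursions each contributing a factor involving $N(x)\asymp\sigma^2/2$ and the Green-function mass $g(x,K)$ summed over the annulus — which by \eqref{green} gives $\sum_{\|x\|\asymp 2^k} \|x\|^{-4}\cdot 2^{4k}\asymp$ const, i.e. a constant amount of "new killing" per dyadic scale. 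Summing the recursion over the $\log_2 n$ scales yields $q_{\log_2 n}^{-1}\sim \frac{\sigma^2}{4\pi^2\sqrt{\det Q}}\log n$, which is \eqref{thm_es}. Alternatively, and perhaps cleaner, I would connect the left side of \eqref{thm_es} directly to $\pS_K$ of a far point: by Proposition \ref{KKey}, $\pS_K(x)=G_K(x,K)$, and a last-exit decomposition of paths from $x$ (with $\|x\|$ huge) to $K$ through the annulus $\Ball(2n)\setminus\Ball(n)$ expresses $G_K(x,K)$ as $G_{\text{outer}}(x,\Ball(2n))$ times the escape sum in \eqref{thm_es} times $(1+o(1))$; combined with \eqref{previous} giving $G_K(x,K)\sim \tfrac12(\|x\|^2\log\|x\|)^{-1}\cdot(\text{const})$ and the free-walk estimate for the outer piece $\sim a_4\|x\|^{-2}$, one reads off the escape sum $\sim \frac{1}{2a_4 \log n}=\frac{4\pi^2\sqrt{\det Q}}{\sigma^2}\cdot\frac{1}{\log n}$ after matching constants (note $\tfrac{1}{2a_4}=4\pi^2\sqrt{\det Q}$, and the mean $\sigma^2/2$ enters through the relation between $\rS_K$ and $\pS_K$ and the effective killing rate).

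The main obstacle is controlling the error terms uniformly across all $\log n$ dyadic scales: each scale contributes a $(1+o(1))$ factor, and one needs the product of these to still be $1+o(1)$, which requires the per-scale error to be $o(1/\log n)$, genuinely better than a crude $O(1)$-per-scale bound. This forces careful use of Lemma \ref{con-G-local} (valid only for $\|y\|$ within a $(\log\|x\|)^\alpha$ factor of $\|x\|$, which is why dyadic — not larger — steps are used), the Overshoot Lemma \ref{overshoot} to kill the contribution of paths that jump far past the annulus boundaries (harmless since $\theta$ has exponential moments), and Lemma \ref{pro-g2} to discard atypically long excursions. A secondary technical point is that $\Num(\gamma)$ and $\Num^-(\gamma)$ in \eqref{k1}, together with \eqref{eq-0}, should be used to pin down the constant $\sigma^2/2$ rigorously rather than by heuristic, i.e. one expands $g(x,K)=\sum_\gamma \BRW(\gamma)\Num(\gamma)$ and shows the dominant contribution to $\Num(\gamma)$ comes from the portion of $\gamma$ in the bulk annuli where $N\approx\sigma^2/2$, turning \eqref{eq-0} into the statement that the total escape mass, weighted appropriately, reproduces $a_4|K|\|x\|^{-2}$ and hence forces \eqref{thm_es}.
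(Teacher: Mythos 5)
Your second (``cleaner'') route is circular: it extracts the escape sum from a sharp asymptotic $G_K(x,K)\sim\frac{1}{2\sigma^2}(\|x\|^2\log\|x\|)^{-1}$ for a far point $x$. But \eqref{previous} only gives the two-sided bound $\pS_K(x)\asymp(\|x\|^2\log\|x\|)^{-1}$ with unspecified constants; the sharp constant $\frac{1}{2\sigma^2}$ is precisely Theorem \ref{MT4}, which the paper \emph{deduces from} Proposition \ref{thm-es} via the first-visit decomposition $\pS_K(x)=\sum_{b}G_K(x,b)\sum_{a\in K}\Hm^{B}_\KRW(b,a)$. So that route cannot be used to prove the proposition. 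Your first route (a dyadic renewal with $q_k^{-1}-q_{k-1}^{-1}\to\mathrm{const}$) is asserted rather than derived; establishing that increment relation with the correct constant and with per-scale errors $o(1/\log n)$ is exactly the obstacle you name at the end, and you do not resolve it.

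What you relegate to a ``secondary technical point'' is in fact the whole proof, and it is missing its key probabilistic input. The paper's argument is: (i) the exact identity $\sum_{\gamma:x\to K}\BRW(\gamma)\Num(\gamma)=g(x,K)$ (the expected number of visits to $K$ equals the free Green function, by criticality), localized via a last-exit decomposition, Lemma \ref{conv-3} and the Overshoot Lemma to $\sum_{\gamma:\Ball(2n)\setminus\Ball(n)\to K,\,\gamma\subseteq\Ball(n)}\BRW(\gamma)\Num(\gamma)\to|K|$ (equation \eqref{eq-1}); and (ii) a \emph{concentration} statement for the additive functional $\Num(\gamma)$ along such paths: with respect to $\SRW$-weight, $\Num(\gamma)=2a_4\sigma^2|K|\log n\,(1+o(1))$ outside an exceptional set of weight $O((\log n)^{-p})$, together with the second-moment bound $\sum\SRW(\gamma)(\Num(\gamma))^2\preceq(\log n)^2$ needed to control that exceptional set inside the weighted sum. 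These are Lemmas \ref{yy1}--\ref{yy2}, adapted from Le Gall--Lin. Dividing (i) by the typical value of $\Num$ then yields $1/(2a_4\sigma^2\log n)=\frac{4\pi^2\sqrt{\det Q}}{\sigma^2}\frac{1}{\log n}$. Your proposal never identifies the need for concentration: knowing only the mean behaviour of $\Num$ (``$N\approx\sigma^2/2$ in the bulk'') gives the order $1/\log n$ but cannot pin down the constant, so as written the argument does not close. A minor point: in the paper's convention $\gamma\subseteq\Ball(n)$ means the \emph{interior} vertices of $\gamma$ lie in $\Ball(n)$, not that $\gamma$ stays in $\Ball(n)^c$.
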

In order to prove this proposition, we need two lemmas about random walks. They are adjusted versions of Lemma 17 and Lemma 18 in \cite{LL141}. Let $(S_j)_{j\in \N}$ be the random walk (starting from $0$), $\tau_n$ be the first visiting time of $\Ball(n)^c$ by the random walk and $h(x):\Z^4\rightarrow \R^+$ is a fixed positive function satisfying $h(x)\sim a_4\|x\|^{-2}$.

\begin{lemma}\label{yy1}
For $p=1,2$, there exists a constant $C(p)$ (also depending on $h$) such that, for every $n\geq 2$,
\begin{equation}
E(\sum_{j=0}^{\tau_n}h(S_j))^p\leq C(p)(\log n)^p.
\end{equation}.
\end{lemma}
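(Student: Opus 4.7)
\textbf{Proof plan for Lemma \ref{yy1}.}

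The plan is to treat $p=1$ directly with a shell-decomposition estimate, and then bootstrap to $p=2$ by the Markov property. A key strengthening is needed: I want to prove the uniform bound
\[
\sup_{x\in\Ball(n)}E^{x}\!\left[\sum_{j=0}^{\tau_n}h(S_j)\right]\leq C\log n,
\]
because this uniform version is what powers the second-moment argument.

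For $p=1$, I would write the expectation as a sum involving the Green function of the walk killed on exit from $\Ball(n)$:
\[
E^{x}\!\left[\sum_{j=0}^{\tau_n}h(S_j)\right]=\sum_{y\in\Ball(n)}\Hm^{\Ball(n)}(x,y)\,h(y)\leq\sum_{y\in\Ball(n)}g(x,y)\,h(y).
\]
Using $g(x,y)\asymp\|x-y\|^{-2}$ from \eqref{green} and $h(y)\asymp\|y\|^{-2}$ by hypothesis, the task reduces to bounding $\sum_{y\in\Ball(n)}\|x-y\|^{-2}\|y\|^{-2}$ by $C\log n$, uniformly in $x\in\Ball(n)$. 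I would split $\Ball(n)$ into three regions: (i) $\|y\|\leq\|x\|/2$, where $\|x-y\|\asymp\|x\|$ and the shell count $|\{y:\|y\|\asymp k\}|\asymp k^{3}$ gives a contribution $\preceq \|x\|^{-2}\cdot\|x\|^{2}=O(1)$; (ii) $\|y-x\|\leq\|x\|/2$, where $\|y\|\asymp\|x\|$ and a symmetric computation yields $O(1)$; (iii) the remaining region, where both $\|y\|$ and $\|y-x\|$ are $\succeq\|x\|$, so the integrand is $\preceq\|y\|^{-4}$ away from the ball of radius $2\|x\|$ and contributes $\sum_{k=\|x\|}^{n}k^{3}\cdot k^{-4}\asymp\log(n/\|x\|)\leq\log n$; the annular part $\|y\|\asymp\|x\|$ contributes $O(1)$ by volume. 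Assembling the three regions, the total is $\preceq\log n$ uniformly in $x$.

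For $p=2$, I would expand the square:
\[
\left(\sum_{j=0}^{\tau_n}h(S_j)\right)^{\!2}=2\!\!\sum_{0\leq j\leq k\leq\tau_n}\!\!h(S_j)h(S_k)-\sum_{j=0}^{\tau_n}h(S_j)^{2},
\]
and drop the (nonnegative) diagonal term. For the off-diagonal piece, the strong Markov property applied at time $j$ gives
\[
E\!\left[\sum_{j\leq k\leq\tau_n}h(S_j)h(S_k)\right]=E\!\left[\sum_{j=0}^{\tau_n}h(S_j)\,\Phi(S_j)\right],\qquad\Phi(z)\DeFine E^{z}\!\left[\sum_{l=0}^{\tau_n}h(S_l)\right].
\]
The uniform $p=1$ bound gives $\Phi(z)\leq C\log n$ for every $z\in\Ball(n)$, so another application of the $p=1$ bound yields the desired $C(2)(\log n)^{2}$.

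The main obstacle is the region-splitting estimate for $\sum_{y\in\Ball(n)}\|x-y\|^{-2}\|y\|^{-2}$ uniformly in $x\in\Ball(n)$; everything else is either a direct application of \eqref{green}, a dominated-by-free-walk bound on the killed Green function, or the strong Markov property. An auxiliary point worth keeping in mind is the convention $\|0\|=0.5$ adopted in the paper, which removes any spurious singularity at the origin in these sums.
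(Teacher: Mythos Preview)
Your proposal is correct and takes a genuinely different (and arguably cleaner) route than the paper for the $p=2$ case. The paper handles $p=1$ essentially as you do, but only from the starting point $0$; for $p=2$ it expands $(\sum_j h(S_j))^2$ as a double sum over lattice points $z,w\in\Ball(n)$, proves the pointwise estimate $E(A_zA_w)\preceq(|z|^{-2}+|w|^{-2})|z-w|^{-2}$ (where $A_x$ counts visits to $x$) via a Markov-chain argument with a uniform return-probability constant, and then carries out a careful two-variable sum. Your idea of first strengthening the $p=1$ bound to one uniform in the starting point $x\in\Ball(n)$, and then invoking the strong Markov property at time $j$ to write the off-diagonal piece as $E[\sum_j h(S_j)\Phi(S_j)]$ with $\Phi\leq C\log n$, bypasses that double sum entirely and iterates immediately to any $p$. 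The price is the extra work of making $p=1$ uniform in $x$, but your three-region splitting of $\sum_{y\in\Ball(n)}\|x-y\|^{-2}\|y\|^{-2}$ is standard and correct; incidentally, this convolution estimate is essentially the same computation the paper performs separately as Lemma~\ref{conv-3}. One small bookkeeping point: the term $h(S_{\tau_n})$ at the exit time lies outside $\Ball(n)$ and is not captured by your Green-function identity, but it is $O(n^{-2})$ and harmless.
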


\begin{lemma}\label{yy2}
For every $\alpha,p>0$, there exists a constant $C_\alpha(p)$ (also depending on $h$) such that, for every $n\geq 2$, we have
\begin{equation}
P(|\sum_{k=0}^{\tau_n}h(S_j)-4a_4\log n|\geq \alpha \log n)\leq C_\alpha(p)(\log n)^{-p}.
\end{equation}
\end{lemma}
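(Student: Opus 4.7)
The plan is to prove two things: that $\E Z_n = 4 a_4 \log n + O(1)$ for $Z_n := \sum_{j=0}^{\tau_n} h(S_j)$, and that $\E[(Z_n - \E Z_n)^{2q}] \leq C_q (\log n)^q$ for every integer $q \geq 1$. Markov's inequality with $q \geq p$ then delivers the desired tail bound.

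For the first moment, write $\E Z_n = \sum_{y \in \Ball(n)} \Hm^{\Ball(n)}(0, y) h(y) + O(1)$ (the $O(1)$ absorbs the contribution from $j = \tau_n$). The optional-stopping representation $\Hm^{\Ball(n)}(0,y) = g(y) - \sum_{z \notin \Ball(n)} \Hm^{\Ball(n)}(0,z)\, g(y-z)$, combined with the Green-function asymptotic \eqref{green} and Lemma \ref{overshoot}, yields $\Hm^{\Ball(n)}(0,y) = a_4 \|y\|^{-2}(1+o(1))$ for $\|y\| \leq n/\log n$, together with the uniform bound $\Hm^{\Ball(n)}(0,y) \preceq \|y\|^{-2}$ on $\Ball(n)$. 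Combined with $h(y) \sim a_4 \|y\|^{-2}$, this reduces the computation to the lattice sum $a_4^2 \sum_{1 \leq \|y\| \leq n} \|y\|^{-4}$; a linear change of variable converts it to a Euclidean shell integral equal to $32 \pi^2 \sqrt{\det Q}\log n + O(1) = 4 a_4^{-1}\log n + O(1)$, whence $\E Z_n = 4 a_4 \log n + O(1)$.

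For the concentration, I would decompose the walk dyadically. Fix $k_0$ large, set $\sigma_k := \inf\{j : \|S_j\| \geq 2^k\}$, $k_n := \lfloor \log_2 n \rfloor$, and $Y_k := \sum_{j=\sigma_{k-1}}^{\sigma_k - 1} h(S_j)$, so that $Z_n = R + \sum_{k_0 \leq k \leq k_n} Y_k$, with $R$ collecting the initial ($j < \sigma_{k_0}$) and terminal ($\sigma_{k_n} \leq j \leq \tau_n$) fragments; Lemma \ref{overshoot} and the local boundedness of $h$ ensure $\E |R|^{2q} = O(1)$. The key input is a scale-invariant strengthening of Lemma \ref{yy1}: uniformly in $k$, in the starting position on $\{\|x\| \asymp 2^{k-1}\}$, and for every integer $q$,
$$\E[Y_k \mid \mathcal{F}_{\sigma_{k-1}}] = 4 a_4 \log 2 + O(2^{-k}), \qquad \E\bigl[(Y_k - \E[Y_k \mid \mathcal{F}_{\sigma_{k-1}}])^{2q} \mid \mathcal{F}_{\sigma_{k-1}}\bigr] \leq C_q.$$
This reduces, by rescaling the walk by $2^{-k}$, to a fixed-scale annular excursion estimate handled by the same argument underlying Lemma \ref{yy1} (the higher-moment analogues exploiting the exponential moment of $\theta$), and the constant $4 a_4 \log 2$ is forced by matching with the first-moment computation above.

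Setting $M := \sum_{k_0 \leq k \leq k_n}(Y_k - \E[Y_k \mid \mathcal{F}_{\sigma_{k-1}}])$ produces a martingale whose predictable quadratic variation satisfies $\langle M \rangle \leq C \log n$ almost surely; Burkholder--Davis--Gundy then gives $\E M^{2q} \leq C_q(\log n)^q$. The drift sum $\sum_k \E[Y_k \mid \mathcal{F}_{\sigma_{k-1}}]$ equals $4 a_4 \log n + O(1)$ almost surely, the $O(1)$ coming from telescoping the geometric $O(2^{-k})$ corrections, so $Z_n - 4 a_4 \log n = M + R'$ with $\E|R'|^{2q} = O(1)$, and Markov's inequality completes the argument. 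The main obstacle is precisely the scale-invariant conditional moment bound above: Lemma \ref{yy1} as stated covers only walks from the origin exiting a centred ball, so one must verify that its proof goes through uniformly for walks started at an arbitrary point on a dyadic shell and exiting the next dyadic ball, and must establish the sharp $O(2^{-k})$ drift correction. Once this routine but non-trivial extension is in place, the rest of the argument is standard martingale analysis.
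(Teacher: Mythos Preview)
Your approach is sound and takes a genuinely different route from the paper's. The paper gives no self-contained argument: it cites Lemma~18 of Le~Gall--Lin \cite{LL14}, which proves the fixed-time analogue $P(|\sum_{j\leq n}h(S_j)-2a_4\log n|\geq\alpha\log n)\leq C_\alpha(p)(\log n)^{-p}$ by first establishing it for Brownian motion and then transferring to the random walk via the strong invariance principle, after which the stopping-time version follows from $P(\tau_n\notin[n^{2-\epsilon},n^{2+\epsilon}])=o((\log n)^{-p})$. Your dyadic-martingale scheme avoids the continuum detour entirely and works directly on the lattice; this is more self-contained and in principle demands less of $\theta$ than the KMT-type coupling implicit in the paper's route. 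Two small caveats worth flagging: the per-scale drift error is not literally $O(2^{-k})$, since the overshoot $\|S_{\sigma_{k-1}}\|-2^{k-1}$ is random (bounded only in moments) and the hypothesis $h(y)\sim a_4\|y\|^{-2}$ carries no rate, so the drift is $4a_4\log n+o(\log n)$ rather than $+O(1)$ --- but this weaker statement, combined with $\E M^{2q}\preceq(\log n)^q$, still yields the tail bound via Markov; and your ``rescaling the walk by $2^{-k}$'' is only a heuristic, since the lattice walk does not literally rescale --- the uniform bound $\E_x[Y_k^{2q}]\leq C_q$ for $\|x\|\asymp 2^k$ must be proved directly by iterating the Green-function convolution estimate underlying Lemma~\ref{yy1}, which does go through.
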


In fact, we apply both lemmas for the reversed random walk (that is, with jump distribution $\theta^-$) other than the original random walk. Let $h(x)=2N(x)g(x,K)/(\sigma^2|K|)$.  Recall that $N(x)=E\mu_x\sim \sigma^2/2$ and $\Num(\gamma)=\sum \sigma^2|K|h(\gamma(i))/2$). Hence, for any $a\in K$ we have:
$$
\sum_{\gamma:\Ball(n)^c\rightarrow a, \gamma\subseteq\Ball(n)}\SRW(\gamma)(\Num(\gamma))^2\preceq (\log n)^2,
$$
$$
\sum_{\gamma:\Ball(n)^c\rightarrow a, \gamma\subseteq\Ball(n), |\Num(\gamma)-2a_4|K|\sigma^2\log n|\geq \alpha\log n}\SRW(\gamma)\leq C_\alpha(p)(\log n)^{-p}.
$$
By monotonicity and summation, we get:
\begin{equation}\label{yy_1}
\sum_{\gamma:\Ball(n)^c\rightarrow K,\gamma\subseteq\Ball(n)\setminus K}\SRW(\gamma)(\Num(\gamma))^2\preceq (\log n)^2,
\end{equation}
\begin{equation}\label{yy_2}
\sum_{\gamma:\Ball(n)^c\rightarrow K,\gamma\subseteq\Ball(n)\setminus K,|\Num(\gamma)-2a_4|K|\sigma^2\log n|\geq \alpha\log n}\SRW(\gamma)\preceq C_\alpha(p)(\log n)^{-p}.
\end{equation}

Let us make some comments about the proofs. Lemma 18 in \cite{LL14} states that
$$
P(|\sum_{k=0}^{n}g(S_j)-2a_4\log n|\geq \alpha \log n)\leq C_\alpha(\log n)^{-3/2}.
$$
where $g$ is the Green function. Their argument is to derive an analogous result for Brownian motion and then to transfer this result to the random walk via the strong invariance principle. This argument also works here with small adjustments. Note that it is assumed there that the jump distribution $\theta$ is symmetric (besides having exponential tail). However if one checks the proof there, one can see that the assumption of symmetry is not needed and $g(x)$ can be replaced by any $h(x)$ satisfying $h(x)\sim a_4\|x\|^{-2}$ . Moreover, the exponent $3/2$ can be replaced by any positive constant $p$ with minor modifications.
Combing this with the fact that for any fixed $\epsilon>0$, $P(\tau_n\notin [n^{2-\epsilon}, n^{2+\epsilon}])=o((\log n)^{-p})$, one can get Lemma \ref{yy2}.

For Lemma \ref{yy1}, we give a direct proof here:
\begin{proof}[Proof of Lemma \ref{yy1}]

\textbf{For p=1},
\begin{align*}
E(\sum_{j=0}^{\tau_n}h(S_j))&\preceq\sum_{z\in\Ball(n)}h(z)E(\sum_{j=0}^{\tau_n}\mathbf{1}_{S_j=z})
\preceq\sum_{z\in\Ball(n)}|z|^{-2}E(\sum_{j=0}^{\infty}\mathbf{1}_{S_j=z})\\
&=\sum_{z\in\Ball(n)}|z|^{-2}g(0,z)\asymp\sum_{z\in\Ball(n)}|z|^{-4}\asymp\log n.
\end{align*}
\textbf{For p=2},
\begin{align*}
&E(\sum_{j=0}^{\tau_n}h(S_j))^2\preceq E(\sum_{z\in\Ball(n)}h(z)\sum_{j=0}^{\tau_n}\mathbf{1}_{S_j=z})^2
\preceq E(\sum_{z\in\Ball(n)}|z|^{-2}\sum_{j=0}^{\infty}\mathbf{1}_{S_j=z})^2\\
&=\sum_{z,w\in\Ball(n)}|z|^{-2}|w|^{-2}E(\sum_{j=0}^{\infty}\mathbf{1}_{S_j=z}\sum_{i=0}^{\infty}\mathbf{1}_{S_i=w}).
\end{align*}
Write $A_x=\sum_{j=0}^{\infty}\mathbf{1}_{S_j=x}$ and $A=A_z+A_w$. We point out that
\begin{equation}\label{t1}
E(A_zA_w)\preceq (|z|^{-2}+|w|^{-2})|z-w|^{-2}.
\end{equation}

If so, note that
\begin{align*}
&\sum_{z,w\in\Ball(n)}|z|^{-2}|w|^{-2}(|z|^{-2}+|w|^{-2})|z-w|^{-2}
\preceq\sum_{z,w\in\Ball(n):|z|\leq |w|}|z|^{-4}|w|^{-2}|z-w|^{-2}\\
\leq&\sum_{w\in\Ball(n)}(\sum_{z:|z|\leq|w|,|z-w|\geq |w|/2}+\sum_{z:|z|\leq|w|,|z-w|\leq |w|/2})|z|^{-4}|w|^{-2}|z-w|^{-2}\\
\preceq&\sum_{w\in\Ball(n)}
(\sum_{z:|z|\leq|w|,|z-w|\geq |w|/2}|z|^{-4}|w|^{-4}+\sum_{z:|z|\leq|w|,|z-w|\leq |w|/2}|w|^{-6}|z-w|^{-2})\\
\preceq&\sum_{w\in\Ball(n)}((\log|w|)|w|^{-4}+|w|^{-4})\asymp(\log n)^2.
\end{align*}
and then one can get $E(\sum_{j=0}^{\tau_n}h(S_j))^2\preceq (\log n)^2$.
We now only need to show \eqref{t1}. Without loss of generality, assume $z\neq w$ (the case $z=w$ can be addressed similarly with small adjustments).
Note that
$$
E(A_zA_w)\leq E(A^2;A_z>0,A_w>0)\asymp\sum_{k\geq2}kP(A\geq k,A_z>0,A_w>0).
$$
By Markov property, one can see that:
\begin{multline*}
P(A\geq k,A_z>0,A_w>0)\leq \\
P(A_z>0)((k-1)P_z(A_w>0)c^{k-2})+P(A_w>0)((k-1)P_w(A_z>0)c^{k-2}),
\end{multline*}
where we write $P_x$ for the law of random walk starting from $x$ and
$$
c=\sup_{x\neq y\in\Z^4}{P_x(A_x+A_y>1)}<1.
$$
Hence, we have:
\begin{align*}
&\sum_{k\geq2}kP(A\geq k,A_z>0,A_w>0)\\
\leq&(\sum_{k\geq2}k(k-1)c^{k-2})(P(A_z>0)P_z(A_w>0)+P(A_w>0)P_w(A_z>0))\\
\preceq& P(A_z>0)P_z(A_w>0)+P(A_w>0)P_w(A_z>0)\\
\asymp& (|z|^{-2}+|w|^{-2})|z-w|^{-2}.
\end{align*}

\end{proof}

\begin{proof}[Proof of Proposition \ref{thm-es}]
We first show the following weaker result:
\begin{lemma}
\begin{equation}\label{lem-1}
\sum_{\gamma:\Ball(2n)\setminus \Ball(n)\rightarrow:K,\gamma\subseteq \Ball(n)}\BRW(\gamma)\preceq (\log n)^{-1},
\quad\text{as }n\rightarrow \infty.
\end{equation}
\end{lemma}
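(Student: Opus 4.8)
The plan is to reverse time so the excursions emanate from $K$, and then to harvest the logarithmic factor from the killing, using the moment estimates \eqref{yy_1}--\eqref{yy_2} together with the exact pairing identity \eqref{eq-0}.

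\emph{Reversal and reduction.} First I would reverse each path. Since $\SRW(\gamma)$ is unchanged when $\gamma$ is read backwards (with $\Rtheta$ in place of $\theta$) and the killing weight $\prod(1-\KRW(\cdot))$ is a product over the vertices of $\gamma$, every path counted on the left of \eqref{lem-1} is in weight-preserving bijection with a path $\gamma'$ that starts at some $a\in K$, ends in $\Ball(2n)\setminus\Ball(n)$, has all interior vertices in $\Ball(n)\setminus K$, and carries the weight of the $\Rtheta$-random walk killed at rate $\KRW=\rS_K$, with the killing applied on arrival so that the starting vertex $a$ (where $\rS_K\equiv1$) is exempt. Enlarging the endpoint set from $\Ball(2n)\setminus\Ball(n)$ to all of $\Ball(n)^c$ and dropping the killing factor at the last vertex only increases the sum, and the contribution of endpoints outside $\Ball(2n)$ is $\preceq n^{2}n^{-p}$ by Lemma~\ref{overshoot}, hence negligible. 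Thus it suffices to show that $\sum_{a\in K}q_a(n)\preceq(\log n)^{-1}$, where $q_a(n)$ is the probability that the $\Rtheta$-walk from $a$, killed at rate $\rS_K$, exits $\Ball(n)$ before dying; equivalently, writing $\BRW$ now for this reversed killed weight, that $\sum_{a\in K}\sum_{\gamma:a\to\Ball(n)^c,\ \gamma\subseteq\Ball(n)\setminus K}\BRW(\gamma)\preceq(\log n)^{-1}$.

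\emph{Splitting by $\Num$.} Set $\mu_n=2a_4|K|\sigma^2\log n$ and fix a small $\alpha>0$. For the ``fast'' excursions, those with $\Num(\gamma)\le\mu_n-\alpha\log n$, I would bound $\BRW(\gamma)\le\SRW(\gamma)$ and invoke the reversed form of \eqref{yy_2}; their total contribution is $\preceq C_\alpha(p)(\log n)^{-p}\preceq(\log n)^{-1}$ for $p\ge1$. For the remaining excursions, $\Num(\gamma)>\mu_n-\alpha\log n$, I would genuinely use the killing. By \eqref{previous} and \eqref{r-p} one has $\KRW(z)=\rS_K(z)\asymp\pS_K(z)\asymp(\|z\|^2\log\|z\|)^{-1}$, whereas $N(z)g(z,K)\asymp\|z\|^{-2}$, so along such a $\gamma$ (which stays in $\Ball(n)$) we get the pointwise comparison $\KRW(z)\asymp N(z)g(z,K)/\log\|z\|$. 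Decomposing $\gamma$ over the dyadic shells $\{z:2^{k}\le\|z\|<2^{k+1}\}$, $k_0\le k\le\log_2 n$, and estimating on the $k$-th shell both the accumulated killing and the probability of being killed there by a second-moment computation modeled on the proof of Lemma~\ref{yy1} (that is, on \eqref{t1}, but now with the extra $(\log)^{-1}$ weight), one finds the probability of surviving the $k$-th shell is at most $1-c/k$; multiplying over the shells bounds the survival probability of a typical excursion by $\preceq\prod_{k}(1-c/k)\preceq(\log n)^{-c}$.

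\emph{The obstacle: fixing the exponent.} The step I expect to be the main obstacle is showing that the exponent $c$ above can be taken $\ge1$, i.e.\ that the logarithmic gain is genuinely of order $(\log n)^{-1}$ and not merely $(\log n)^{-\delta}$ for some $\delta\in(0,1)$. Equivalently one must show that the killing accumulated along a typical surviving excursion is $(1-o(1))\log\log n$, i.e.\ that it is $(1-o(1))/k$ on the $k$-th shell; the crude two-sided bound $\rS_K(z)\asymp(\|z\|^2\log\|z\|)^{-1}$ does \emph{not} by itself settle this constant. The way to close it is to feed in the exact identity \eqref{eq-0}: it pins the pairing of the reversed killed weight against $\Num$ to $g(x,K)\sim a_4|K|\|x\|^{-2}$, and, combined with the concentration of $\Num$ around $\mu_n$ coming from \eqref{yy_2} (whose constant $2a_4$ is the Green asymptotics \eqref{green}), this forces the per-shell killing constant to the value that makes the total accumulated killing $(1-o(1))\log\log n$. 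Granting this, adding the contributions from the fast and the typical excursions yields \eqref{lem-1}; all the other ingredients---the reversal, the overshoot bound, the application of \eqref{yy_2}, and the dyadic second-moment estimate---are routine.
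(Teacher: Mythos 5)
Your argument does not follow the paper's route, and as written it has a genuine gap exactly where you flag it. The paper's proof is a three-line comparison that needs none of the machinery you invoke: by the last-exit decomposition (Lemma \ref{bd-1}), for $x$ chosen with $n=\lfloor\|x\|(\log\|x\|)^{-1/4}\rfloor$ one has $\pS_K(x)=\sum_{a\in K}\sum_{z\in\Ball(n)^c}G_K(x,z)\Hm^{\Ball(n)}_\KRW(z,a)$; restricting $z$ to $\Ball(2n)\setminus\Ball(n)$ and using Lemma \ref{con-G-local} to get $G_K(x,z)\sim g(x,z)\succeq\|x\|^{-2}$ there, the left side dominates $\|x\|^{-2}$ times the escape sum in \eqref{lem-1}; since $\pS_K(x)\preceq(\|x\|^2\log\|x\|)^{-1}$ by \eqref{previous} and \eqref{p2}, the bound follows. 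In other words, the a priori upper bound on the visiting probability imported from \cite{Z15} already contains the $(\log n)^{-1}$, and the whole point of the lemma is merely to transfer it to the escape quantity.

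Your route, by contrast, tries to manufacture the $(\log n)^{-1}$ from the accumulated killing, and the obstacle you identify is fatal to that plan: \eqref{previous} and \eqref{r-p} give $\KRW(z)\asymp(\|z\|^2\log\|z\|)^{-1}$ only up to unspecified constants, so the dyadic-shell computation yields at best $(\log n)^{-c}$ with an unknown $c$ that could be less than $1$; no second-moment refinement of Lemma \ref{yy1} repairs this, because the deficiency is in the constant of the killing rate, not in the fluctuations of the occupation times. Your proposed patch --- ``feeding in'' \eqref{eq-0} together with the concentration \eqref{yy_2} --- is the right germ of an idea, but as described it is not an argument, and note that the paper's own implementation of exactly this pairing (the passage from \eqref{eq-0} to \eqref{eq-1} via \eqref{temp-0}, and thence to Proposition \ref{thm-es}) \emph{uses} \eqref{lem-1} to discard the cross term $\Num^-(\gamma_1)$, so you cannot simply cite that development without circularity. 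A correct, non-circular salvage of your idea does exist and would replace your shell analysis entirely: in the last-exit decomposition of $\sum_{\gamma:x\to K}\BRW(\gamma)\Num(\gamma)=g(x,K)$, drop the nonnegative cross term using $\Num(\gamma_1\circ\gamma_2)\geq\Num(\gamma_2)$, lower-bound $\sum_{\gamma_1:x\to z}\BRW(\gamma_1)=G_K(x,z)\succeq\|x\|^{-2}$ on the shell via Lemma \ref{con-G-local}, and conclude $\sum_{\gamma}\BRW(\gamma)\Num(\gamma)\preceq 1$ over the escape paths; combined with your split by $\Num$ and \eqref{yy_2}, this gives \eqref{lem-1} without \eqref{previous}. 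Either you make that argument explicit, or you should simply quote \eqref{previous} as the paper does; what you have written in between does not close.
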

\begin{proof}
By \eqref{previous} and \eqref{key}, we have:
\begin{equation}\label{known}
\sum_{\gamma:x\rightarrow K}\BRW(\gamma)\preceq (\|x\|^2\log \|x\|)^{-1}.
\end{equation}
Pick some $x\in\Z^d$ such that $n=\lfloor \|x\|(\log\|x\|)^{-1/4}\rfloor$. Let $B=\Ball(n)$ and $B_1=\Ball(2n)$. By the First-Visit Lemma, we have:
\begin{align*}
\sum_{\gamma:x\rightarrow K}\BRW(\gamma)&=\sum_{a\in K}\sum_{z\in B^c}G_K(x,z)\Hm_\KRW^B(z,a)\geq
\sum_{a\in K}\sum_{z\in B_1\setminus B}G_K(x,z)\Hm_\KRW^B(z,a)\\
&\stackrel{\eqref{G_local}}{\succeq}\sum_{a\in K}\sum_{z\in B_1\setminus B}g(x,z)\Hm_\KRW^B(z,a)\succeq\|x\|^{-2}\sum_{\gamma:\Ball(2n)\setminus \Ball(n)\rightarrow K,\gamma\subseteq \Ball(n)}\BRW(\gamma).\\
\end{align*}
Combining this with \eqref{known} gives \eqref{lem-1}.
\end{proof}
We need to transfer \eqref{eq-0} to the following form:
\begin{lemma}
\begin{equation}\label{eq-1}
\lim_{n\rightarrow \infty}\sum_{\gamma:\Ball(2n)\setminus \Ball(n)\rightarrow K,\gamma\subseteq \Ball(n)}\Num(\gamma)\BRW(\gamma)= |K|.
\end{equation}
\end{lemma}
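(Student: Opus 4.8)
The plan is to derive \eqref{eq-1} from \eqref{eq-0} by a last-exit decomposition. Fix a large $n$ and, as in the proof of \eqref{lem-1}, pick $x$ with $n=\lfloor\|x\|(\log\|x\|)^{-1/4}\rfloor$, so that $\|x\|=n(\log n)^{1/4}(1+o(1))$ and $\|x\|\to\infty$ as $n\to\infty$; since the sum $\Sigma_n$ appearing in \eqref{eq-1} depends only on $n$, it suffices to prove $\Sigma_n\to|K|$ along this choice of $x$. Write $u(z)=\Hm^{\Ball(n)}_K(z,K)$, $v(z)=\sum_{\gamma:z\to K,\,\gamma\subseteq\Ball(n)}\Num(\gamma)\BRW(\gamma)$ and $w(z)=\sum_{\gamma:x\to z}\Num^-(\gamma)\BRW(\gamma)$. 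The key algebraic inputs are $\BRW(\gamma_1\circ\gamma_2)=\BRW(\gamma_1)\BRW(\gamma_2)$ and $\Num(\gamma_1\circ\gamma_2)=\Num^-(\gamma_1)+\Num(\gamma_2)$ (immediate from \eqref{k1}). Since $1-\KRW_K$ vanishes on $K$, splitting each $\gamma:x\to K$ at its last vertex $z$ lying outside $\Ball(n)$ (in the spirit of Lemma~\ref{bd-1}) and invoking \eqref{eq-0} gives
\begin{equation*}
a_4|K|\,\|x\|^{-2}(1+o(1))=g(x,K)=\sum_{z\notin\Ball(n)}w(z)\,u(z)+\sum_{z\notin\Ball(n)}G_K(x,z)\,v(z)=:\mathrm{(I)}+\mathrm{(II)}.
\end{equation*}

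Next I would show that $\mathrm{(II)}=(1+o(1))\,a_4\|x\|^{-2}\Sigma_n+o(\|x\|^{-2})$ is the main term. For $z\notin\Ball(2n)$ every contributing path makes a jump of size $\succeq n$, so the exponential-moment assumption on $\theta$ forces $\sum_{z\notin\Ball(2n)}v(z)$ (and likewise $\sum_{z\notin\Ball(2n)}u(z)$) to be smaller than any power of $n$; together with $G_K(x,z)\le\sup_w g(w)<\infty$ this makes the $z\notin\Ball(2n)$ part of $\mathrm{(II)}$ negligible relative to $\|x\|^{-2}$. For $z\in\Ball(2n)\setminus\Ball(n)$ one has $\|z\|\ll\|x\|$, hence $\|x-z\|=\|x\|(1+o(1))$ uniformly, and then Lemma~\ref{con-G-local} combined with \eqref{green} yields $G_K(x,z)=a_4\|x\|^{-2}(1+o(1))$ uniformly over the annulus. (This is precisely where the coupling $\|x\|\asymp n(\log\|x\|)^{1/4}$ is used: it places $\|z\|\in[n,2n]$ in the range of validity of Lemma~\ref{con-G-local}.) Since $\sum_{z\in\Ball(2n)\setminus\Ball(n)}v(z)=\Sigma_n$ by definition, the claim follows.

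The heart of the argument is to show $\mathrm{(I)}=o(\|x\|^{-2})$. I would bound $w(z)$ by expanding $\Num^-(\gamma)=\sum_{i<|\gamma|}N(\gamma(i))g(\gamma(i),K)$ and resumming over the interior visits of $\gamma:x\to z$ to each vertex $y$ (the visit decomposition behind Lemma~\ref{bd-1}), which gives
\begin{equation*}
w(z)\le\sum_{y}N(y)\,g(y,K)\,G_K(x,y)\,G_K(y,z)\le\sum_{y}N(y)\,g(y,K)\,g(x,y)\,g(y,z).
\end{equation*}
Inserting $N\preceq1$, $g(\cdot,K)\preceq(\|\cdot\|\vee1)^{-2}$ and $g(\cdot,\cdot)\preceq(\|\cdot\|\vee1)^{-2}$ and carrying out the (routine, four-dimensional) Green-function convolution, with the two ``inner'' centres $0,z$ at mutual distance $\asymp n$ and the ``outer'' centre $x$ at distance $\asymp\|x\|$, one obtains $w(z)\preceq\|x\|^{-2}\log\log n$ uniformly for $z\in\Ball(2n)\setminus\Ball(n)$, the $\log\log n$ coming solely from the range of scales $n\preceq\|y\|\preceq\|x\|$ (a factor $(\log n)^{1/4}$ apart). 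Since $\sum_{z\notin\Ball(n)}u(z)\preceq(\log n)^{-1}$ by \eqref{lem-1} (the $z\notin\Ball(2n)$ tail being negligible), it follows that $\mathrm{(I)}\preceq\|x\|^{-2}\log\log n/\log n=o(\|x\|^{-2})$. Substituting the estimates for $\mathrm{(I)}$ and $\mathrm{(II)}$ into the displayed identity and dividing by $a_4\|x\|^{-2}$ gives $\Sigma_n\to|K|$, which is \eqref{eq-1}. I expect this last estimate on $\mathrm{(I)}$ to be the main obstacle: one must recognise that the weight $\Num^-$ carried by the part of the excursion lying outside $\Ball(n)$ amounts only to a harmonic sum over the scales between $n$ and $\|x\|$, of size $\log\log n$ rather than $\log n$, so that it is swallowed by the $(\log n)^{-1}$ supplied by the escape bound \eqref{lem-1}.
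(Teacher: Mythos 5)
Your proposal is correct and is essentially the paper's own argument: the same last-exit decomposition of $\gamma:x\rightarrow K$ at the final vertex outside $\Ball(n)$ using $\Num(\gamma_1\circ\gamma_2)=\Num^-(\gamma_1)+\Num(\gamma_2)$, the same convolution estimate (Lemma \ref{conv-3}) yielding $w(z)\preceq\|x\|^{-2}\log(\|x\|/n)$, which against the $(\log n)^{-1}$ from \eqref{lem-1} kills term (I), and Lemma \ref{con-G-local} plus the Overshoot Lemma for term (II). The only step you gloss is the tail $z\notin\Ball(2n)$ of (II), where the overshoot alone does not control the unbounded weight $\Num(\gamma_2)$; the paper (and you would need to) handles it by Cauchy--Schwarz against the second-moment bound \eqref{yy_1}.
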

\begin{proof}[Proof of \eqref{eq-1}]
Pick some $x\in\Z^d$ such that $n=\lfloor \|x\|(\log\|x\|)^{-1/4}\rfloor$. Let $B=\Ball(n)$ and $B_1=\Ball(2n)$. By decomposing $\gamma$ at the last step in $B$, one can get:
\begin{align*}
&\sum_{\gamma:x\rightarrow K}\BRW(\gamma)\Num(\gamma)=
\sum_{z\in B^c}\sum_{\gamma_1:x\rightarrow z}\sum_{\gamma_2:z\rightarrow K,\gamma_2\subseteq B}
\BRW(\gamma_1)\BRW(\gamma_2)(\Num^-(\gamma_1)+\Num(\gamma_2))=\\
&\sum_{z\in B^c}\left(\sum_{\gamma_1:x\rightarrow z}\sum_{\gamma_2:z\rightarrow K,\gamma_2\subseteq B}
\BRW(\gamma_1)\BRW(\gamma_2)\Num^-(\gamma_1)+\sum_{\gamma_1:x\rightarrow z}\sum_{\gamma_2:z\rightarrow K,\gamma_2\subseteq B}
\BRW(\gamma_1)\BRW(\gamma_2)\Num(\gamma_2)\right)\\
&=\sum_{z\in B^c}\sum_{\gamma_2:z\rightarrow K,\gamma_2\subseteq B}\BRW(\gamma_2)\sum_{\gamma_1:x\rightarrow z}\BRW(\gamma_1)
\Num^-(\gamma_1)+\\
&\quad\quad\quad\quad\quad\quad\quad\quad\quad\quad\quad\quad\quad\quad\quad
\sum_{z\in B^c}\sum_{\gamma_2:z\rightarrow K,\gamma_2\subseteq B}\BRW(\gamma_2)\Num(\gamma_2)
\sum_{\gamma_1:x\rightarrow z}
\BRW(\gamma_1).
\end{align*}

We argue that the first term is negligible:
\begin{equation}\label{temp-0}
\sum_{z\in B^c}\sum_{\gamma_2:z\rightarrow K,\gamma_2\subseteq B}\BRW(\gamma_2)\sum_{\gamma_1:x\rightarrow z}\BRW(\gamma_1)
\Num^-(\gamma_1)\ll \|x\|^{-2}.
\end{equation}
Note that
\begin{align*}
\sum_{\gamma:x\rightarrow z}&\BRW(\gamma)\Num^-(\gamma)
\leq\sum_{w\in \Z^d}N(w)g(w,K)\sum_{\gamma:x\rightarrow z}
\BRW(\gamma)\sum_{i=0}^{|\gamma|}\mathbf{1}_{\gamma(i)=w}\\
&\preceq \sum_{w\in \Z^d}|w|^{-2}\sum_{\gamma:x\rightarrow w}\BRW(\gamma)
\sum_{\gamma:w\rightarrow z}\BRW(\gamma)
\leq\sum_{w\in \Z^d}|w|^{-2}g(x,w)g(w,z).\\
\end{align*}
In order to estimate the term above, we need the following easy lemma whose proof we postpone
\begin{lemma}\label{conv-3}
For any $a,b,c\in \Z^4$, we have:
\begin{equation}
\sum_{z\in\Z^4}|z-a|^{-2}|z-b|^{-2}|z-c|^{-2}\preceq \frac{1\vee\log(M/m)}{M^2},
\end{equation}
where $M=\max\{|a-b|,|b-c|,|c-a|\}$ and $m=\min\{|a-b|,|b-c|,|c-a|\}$.
\end{lemma}
By this lemma, when $z\in B_1\setminus B$,
$\sum_{\gamma:x\rightarrow z}\BRW(\gamma)\Num^-(\gamma)\preceq\frac{\log (\|x\|/n)}{\|x\|^2}$. Together with \eqref{lem-1}, we have
$$
\sum_{z\in B_1\setminus B}\sum_{\gamma_2:z\rightarrow K,\gamma_2\subseteq B}\BRW(\gamma_2)\sum_{\gamma_1:x\rightarrow z}\BRW(\gamma_1)
\Num^-(\gamma_1)\ll\|x\|^{-2}.
$$
Also by Lemma \ref{conv-3} when $z\in B^c_1$, $\sum_{\gamma:x\rightarrow z}\BRW(\gamma)\Num^-(\gamma)\preceq\frac{\log (\|x\|)}{\|x\|^2}$.
On the other hand, by the Overshoot Lemma, we have $\sum_{z\in B^c_1}\sum_{\gamma_2:z\rightarrow K,\gamma_2\subseteq B}\BRW(\gamma_2)\preceq n^{-4}$.
Hence,
$$
\sum_{z\in B^c_1}\sum_{\gamma_2:z\rightarrow K,\gamma_2\subseteq B}\BRW(\gamma_2)\sum_{\gamma_1:x\rightarrow z}\BRW(\gamma_1)\Num^-(\gamma_1)\ll\|x\|^{-2}.
$$
This completes the proof of \eqref{temp-0}. Combining \eqref{temp-0} with \eqref{eq-0} gives:
$$
\sum_{z\in B^c}\sum_{\gamma_2:z\rightarrow K,\gamma_2\subseteq B}\BRW(\gamma_2)\Num(\gamma_2)
\sum_{\gamma_1:x\rightarrow z}\BRW(\gamma_1)\sim a_4|K|\|x\|^{-2}.
$$
Now we aim to show
\begin{equation}
\sum_{z\in B_1^c}\sum_{\gamma_2:z\rightarrow K,\gamma_2\subseteq B}\BRW(\gamma_2)\Num(\gamma_2)
\sum_{\gamma_1:x\rightarrow z}\BRW(\gamma_1)\ll a_4|K|\|x\|^{-2}.
\end{equation}
If so, then we have:
\begin{equation}\label{o4}
\sum_{z\in B_1\setminus B}\sum_{\gamma_2:z\rightarrow K,\gamma_2\subseteq B}\BRW(\gamma_2)\Num(\gamma_2)
\sum_{\gamma_1:x\rightarrow z}\BRW(\gamma_1)\sim a_4|K|\|x\|^{-2}
\end{equation}
and combining this with Lemma \ref{con-G-local} gives \eqref{eq-1}.
Since $\sum_{\gamma_1:x\rightarrow z}\BRW(\gamma_1)=G_K(x,z)\preceq1$ and $\BRW(\gamma)\leq \SRW(\gamma)$. It suffices to show:
\begin{equation}\label{o3}
\sum_{\gamma: B_1^c\rightarrow K,\gamma\subseteq B}\SRW(\gamma)\Num(\gamma)\ll \|x\|^{-2}.
\end{equation}
By Cauchy-Schwarz inequality, one can get:
$$
\sum_{\gamma: B_1^c\rightarrow K,\gamma\subseteq B}\SRW(\gamma)\Num(\gamma)\leq
(\sum_{\gamma: B_1^c\rightarrow K,\gamma\subseteq B}\SRW(\gamma))^{1/2}
(\sum_{\gamma: B_1^c\rightarrow K,\gamma\subseteq B}\SRW(\gamma)(\Num(\gamma))^2)^{1/2}.
$$
By the Overshoot Lemma, the first term in the right hand side decays faster than any polynomial of $n$. On the other hand, due to \eqref{yy_1}, the second term in the right hand side is less than $\log n$ by a constant multiplier. Combining both gives \eqref{o3} and finishes the proof of \eqref{eq-1}.
\end{proof}

Now we are ready to prove Proposition \ref{thm-es}. Fix any small $\epsilon>0$. Let $n=\|x\|/ (\log \|x\|)^{1/4}$,
$$
\Gamma=\{\gamma:\Ball(2n)\setminus \Ball(n)\rightarrow K,\gamma\subseteq \Ball(n)\setminus K\},
$$
$$
\Gamma_1=\{\gamma\in \Gamma:|\Num(\gamma)-2a_4\sigma^2|K|\log n|>\epsilon \log n\}\quad \text{ and }\quad \Gamma_2=\Gamma\setminus \Gamma_1.
$$
By \eqref{yy_2}, we have: (when $\|x\|$ and hence $n$ are large)
\begin{equation}\label{yy_21}
\sum_{\gamma\in\Gamma_1}\SRW(\gamma)\preceq (\log n)^{-4}.
\end{equation}
Hence, we have (when $n$ is large):
\begin{multline*}
\sum_{\gamma\in\Gamma_1}\BRW(\gamma)\Num(\gamma)\leq\sum_{\gamma\in\Gamma_1}\SRW(\gamma)\Num(\gamma)
\leq(\sum_{\gamma\in\Gamma_1}\SRW(\gamma)\cdot
\sum_{\gamma\in\Gamma_1}\SRW(\gamma)(\Num(\gamma))^2)^{1/2}\\
\stackrel{\eqref{yy_21},\eqref{yy_1}}{\preceq}((\log n)^{-4}(\log n)^2)^{1/2}
=(\log n)^{-1}\ll |K|.
\end{multline*}
Combing this with \eqref{eq-1} gives:
$$
\sum_{\gamma\in\Gamma_2}\BRW(\gamma)\Num(\gamma)\sim|K|.
$$
Hence, we have (when $n$ is large):
$$
(1-\epsilon)|K|/\left((2a_4\sigma^2|K|+\epsilon)\log n\right)
\leq\sum_{\gamma\in\Gamma_2}\BRW(\gamma)
\leq (1+\epsilon)|K|/\left((2a_4\sigma^2|K|-\epsilon)\log n\right).
$$
On the other hand, $\sum_{\gamma\in\Gamma_1}\BRW(\gamma)\ll (\log n)^{-1}$.
Let $\epsilon\rightarrow 0^+$, one can get Proposition \ref{thm-es}.
\end{proof}

\begin{proof}[Proof of Lemma \ref{conv-3}]
Without loss of generality, assume $m=|a-b|$. Let $B_a=\{z:|z-a|\leq 3m/4\}$, $B_b=\{z:|z-b|\leq 3m/4\}$ and $B_c=\{z:|z-c|\leq M/4\}$. Write $t=(a+b)/2$ and $B=\{z:|z-t|\leq 2M\}$. Then we can estimate separately:
\begin{multline*}
\sum_{z\in B_a}|z-a|^{-2}|z-b|^{-2}|z-c|^{-2}\asymp \sum_{z\in B_a}\frac{1}{|z-a|^2m^2M^2}\\
\preceq \frac{1}{m^2M^2}\sum_{z\in B_a}\frac{1}{|z-a|^2}\asymp\frac{m^2}{m^2M^2}\leq \frac{1}{M^2};
\end{multline*}
\begin{equation*}
\sum_{z\in B_b}|z-a|^{-2}|z-b|^{-2}|z-c|^{-2}\preceq \frac{1}{M^2} \text{ (similarly)};
\end{equation*}
\begin{multline*}
\sum_{z\in B_c}|z-a|^{-2}|z-b|^{-2}|z-c|^{-2}\asymp \sum_{z\in B_a}\frac{1}{|z-c|^2M^2M^2}\\
\preceq \frac{1}{M^4}\sum_{z\in B_c}\frac{1}{|z-c|^2}\asymp\frac{M^2}{M^4}\leq \frac{1}{M^2};
\end{multline*}
\begin{multline*}
\sum_{z\in B\setminus(B_a\cup B_b\cup B_c)}|z-a|^{-2}|z-b|^{-2}|z-c|^{-2}\asymp
\sum_{ z\in B\setminus(B_a\cup B_b\cup B_c)}\frac{1}{|z-t|^2|z-t|^2M^2}\\
\preceq \frac{1}{M^2}\sum_{z:m/4\leq|z-t|\leq2M }\frac{1}{|z-t|^4}\asymp
\frac{1}{M^2}\sum_{:m/4\leq n\leq2M}\frac{n^3}{n^4}\preceq\frac{1\vee\log(M/m)}{M^2};
\end{multline*}
\begin{equation*}
\sum_{z\in B^c}|z-a|^{-2}|z-b|^{-2}|z-c|^{-2}\asymp \sum_{z\in B^c}\frac{1}{|z-t|^6}\preceq \sum_{n\geq 2M}\frac{n^3}{n^6}\preceq \frac{1}{M^2}.
\end{equation*}
This completes the proof.
\end{proof}

Now we are ready to prove Theorem \ref{MT4}.
\begin{proof}[Proof of Theorem \ref{MT4}]
Let $n=\|x\|/(\log \|x\|)^{1/4}$, $B=\Ball(n)$, $B_1=\Ball(2n)\setminus B$ and $B_2=\Ball(2n)^c$. As before, by \eqref{p2} and the First-Visit Lemma, we have:
\begin{multline*}
P(\Snake_x \text { visits }K)=\sum_{\gamma: x\rightarrow K}\BRW(\gamma)=\sum_{b\in B^c} G_K(x,b)
\sum_{a\in K}\Hm^{B}_\KRW(b,a)\\
=\sum_{b\in B_1} G_K(x,b)\sum_{a\in K}\Hm^{B}_\KRW(b,a)+\sum_{b\in B_2} G_K(x,b)\sum_{a\in K}\Hm^{B}_\KRW(b,a).
\end{multline*}
We argue that the first term has the desired asymptotics and the second is negligible:
\begin{align*}
\sum_{b\in B_1} &G_K(x,b)\sum_{a\in K}\Hm^{B}_\KRW(b,a)
\stackrel{\eqref{G_local}}{\sim} a_4\|x\|^{-2}\sum_{b\in B_1}\sum_{a\in K}\Hm^{B}_\KRW(b,a)\\
&\stackrel{\eqref{thm_es}}{\sim} a_4\|x\|^{-2} \frac{4\pi^2\sqrt{\det Q}}{\sigma^2\log n}
\sim\frac{1}{2\sigma^2\|x\|^2\log \|x\|};
\end{align*}
\begin{equation*}
\sum_{b\in B_2} G_K(x,b)\sum_{a\in K}\Hm^{B}_\KRW(b,a)\preceq \sum_{a\in K}\sum_{b\in B_2}\Hm^{B}_\KRW(b,a)
\stackrel{\eqref{os}}{\preceq} |K|n^2/n^{5}\ll 1/\|x\|^{2}\log\|x\|.
\end{equation*}

\end{proof}

\section{Convergence of the first visiting point.}

We aim to show Theorem \ref{MT6}. For simplicity, we assume in this section that $\theta$ has finite range. Therefore, for any subset $B\ssubset\Z^4$, we can define its outer boundary and inner boundary by:
$$
\outBd B\doteq\{y\notin B:\exists x\in B, \text{ such that }\theta(x-y)\vee\theta(y-x)>0\};
$$
$$
\intBd B\doteq\{y\in B:\exists x\notin B, \text{ such that }\theta(x-y)\vee\theta(y-x)>0\}.
$$

The first step is to construct the following asymptotical behavior of the Green function:

\begin{lemma}\label{G-4d}
\begin{equation}\label{G_4d}
\lim_{x,y\rightarrow \infty:\|x\|\geq\|y\|}\frac{G_K(x,y)}{(\log \|y\|/\log \|x\|)g(x,y)}=1;
\end{equation}
\end{lemma}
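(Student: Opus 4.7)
The target asymptotic reflects a critical-dimension survival estimate: as the random walk with killing rate $\KRW_K(z)\asymp (\|z\|^2\log\|z\|)^{-1}$ descends from scale $\|x\|$ down to scale $\|y\|$, the accumulated killing has order $\log(\log\|x\|/\log\|y\|)$, so the survival factor is $\sim\log\|y\|/\log\|x\|$. The plan is to isolate this survival factor by a first-visit decomposition at an intermediate scale and then compare with the analogous unkilled identity.

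Concretely, fix $\rho=\|y\|(\log\|y\|)^{1/3}$ and set $B=\Ball(\rho)$; then $y\in B$ and eventually $x\notin B$. By Lemma \ref{bd-1},
$$
G_K(x,y)=\sum_{z\in B}\Hm^{B^c}_K(x,z)\,G_K(z,y),
$$
with Lemma \ref{overshoot} confining the effective sum to $z\in\intBd B$ (i.e.\ $\|z\|\asymp\rho$). Since $\rho/\|y\|=(\log\|y\|)^{1/3}$ fits the hypothesis of Lemma \ref{con-G-local} for any $\alpha\in(1/3,1/2)$, we have $G_K(z,y)\sim g(z,y)$ uniformly in $z\in\intBd B$, so the lemma reduces to
$$
\sum_{z\in\intBd B}\Hm^{B^c}_K(x,z)\,g(z,y)\;\sim\;\frac{\log\rho}{\log\|x\|}\,g(x,y).
$$
Since $\log\rho\sim\log\|y\|$ and the unkilled first-visit identity $g(x,y)=\sum_{z\in B}\Hm^{B^c}(x,z)\,g(z,y)$ differs only by the killing factor, it suffices to show that the ratio $\Hm^{B^c}_K/\Hm^{B^c}$, suitably averaged over $z\in\intBd B$, tends to $\log\rho/\log\|x\|$.

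For this survival estimate, I would write $\Hm^{B^c}_K(x,z)=E_x\big[\mathbf{1}(S_{T_B}=z)\prod_{i<T_B}(1-\KRW_K(S_i))\big]$ with $T_B$ the first entry time into $B$, and apply path-reversal (replacing $\theta$ by $\Rtheta$) to identify the additive functional $\sum_{i<T_B}\KRW_K(S_i)$ along the inward excursion $x\to z$ with the same functional along an outward excursion from $z$ until exit of a ball of radius comparable to $\|x\|$. A dyadic-shell decomposition should then show the expected value is $\sim\log(\log\|x\|/\log\rho)$, and a Lemma \ref{yy2}-type concentration bound would yield $\exp(-\sum_{i<T_B}\KRW_K(S_i))\sim\log\rho/\log\|x\|$ in probability, uniformly in $z$.

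The main obstacle is this last step. Lemmas \ref{yy1}--\ref{yy2} are stated for test functions $h$ with $h(z)\sim a_4\|z\|^{-2}$, but here we need $h=\KRW_K$, which has an extra logarithmic factor $h(z)\asymp\|z\|^{-2}(\log\|z\|)^{-1}$. The first-moment bound should carry over by direct dyadic summation (the extra $\log$ only strengthens convergence, via Lemma \ref{conv-3}), while the concentration bound will require adapting the Brownian-motion/strong-invariance argument from \cite{LL141} to this modified test function. Once the survival factor is pinned down, substituting back---together with $\log\rho\sim\log\|y\|$---yields the desired $G_K(x,y)\sim(\log\|y\|/\log\|x\|)\,g(x,y)$.
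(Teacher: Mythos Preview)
Your setup matches the paper's: decompose at the first visit to $B=\Ball(\rho)$ with $\rho$ a small power of $\log\|y\|$ above $\|y\|$, then use Lemma~\ref{con-G-local} to replace $G_K(z,y)$ by $g(z,y)$ for $z\in\intBd B$. The divergence is in how to extract the factor $\log\rho/\log\|x\|$ from the killed harmonic measure $\Hm^{B^c}_K(x,\cdot)$.

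You propose to analyze the additive functional $\sum_{i<T_B}\KRW_K(S_i)$ directly and appeal to a concentration estimate. As you acknowledge, this requires adapting Lemma~\ref{yy2} from test functions $h\sim a_4\|z\|^{-2}$ to $h=\KRW_K\asymp(\|z\|^2\log\|z\|)^{-1}$, which is not available. There is a second, sharper obstacle: to recover exponent exactly $1$ in $(\log\rho/\log\|x\|)^{1}$ from $\exp\bigl(-\sum_i\KRW_K(S_i)\bigr)$, you would need the \emph{precise} asymptotic $\KRW_K(z)=\rS_K(z)\sim c/(\|z\|^2\log\|z\|)$ with the correct constant $c$; the paper only establishes $\rS_K\asymp\pS_K$ (see~\eqref{r-p}). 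Without that constant, your heuristic yields only survival $=(\log\rho/\log\|x\|)^{\Theta(1)}$.

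The paper bypasses both issues in one line. Since $g(z,y)\sim a_4 n^{-2}$ is asymptotically \emph{constant} over $z\in\intBd B$ (here $n$ plays the role of your $\rho$), only the total mass $\sum_{z\in\intBd B}\Hm^{B^c}_K(x,z)$ is needed. Applying the same first-visit decomposition with target $K$ instead of $y$ gives
\[
\pS_K(x)=G_K(x,K)=\sum_{z\in\intBd B}\Hm^{B^c}_K(x,z)\,\pS_K(z),
\]
and Theorem~\ref{MT4}---already proved in Section~3---supplies $\pS_K(x)\sim(2\sigma^2\|x\|^2\log\|x\|)^{-1}$ and $\pS_K(z)\sim(2\sigma^2 n^2\log n)^{-1}$. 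Dividing yields $\sum_z\Hm^{B^c}_K(x,z)\sim n^2\log n/(\|x\|^2\log\|x\|)$ immediately; multiplying by $a_4 n^{-2}$ and using $\log n\sim\log\|y\|$ finishes the proof. No concentration lemma and no asymptotic for $\rS_K$ are needed: the survival factor drops out of the ratio of two visiting probabilities you already know.
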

\begin{remark}
It is a bit unsatisfactory that we need to require $\|x\|\geq \|y\|$ in the limit. When $\theta$ is symmetric, this requirement can be removed  since $G_K(x,y)/(1-\KRW(x))=G_K(y,x)/(1-\KRW(y))$.
\end{remark}
\begin{proof}
By Lemma \ref{con-G-local}, we can assume $\|x\|\geq \|y\|(\log \|y\|)^{1/4}$. Let $n=\|y\|(\log \|y\|)^{1/8}$ and $B=\Ball(n)$.
As before, we have:
$$
\pS_K(x)=G_K(x,K)=\sum_{z\in \intBd B}\Hm_\KRW^{B^c}(x,z)G_K(z,K)=\sum_{z\in \intBd B}\Hm_\KRW^{B^c}(x,z)\pS_K(z).
$$
By Theorem \ref{MT4}, we get:
\begin{equation}
\sum_{z\in \intBd B}\Hm_\KRW^{B^c}(x,z)
\sim\frac{n^2\log n}{\|x\|^2\log \|x\|}.
\end{equation}
By Lemma \ref{con-G-local}, we have $G_K(z,y)\sim g(z,y)\sim a_4n^{-2}$ for any $z\in \intBd B$. Therefore,
\begin{multline*}
G_K(x,y)=\sum_{z\in \intBd B}\Hm_\KRW^{B^c}(x,z)G_K(z,y)
\sim \sum_{z\in \intBd B}\Hm_\KRW^{B^c}(x,z)a_4n^{-2}\\
\sim a_4\log n/(\|x\|^2\log\|x\|)\sim a_4\log \|y\|/(\|x\|^2\log\|x\|).
\end{multline*}
This finishes the proof.
\end{proof}

Now we give the following asymptotics of the escape probability by a reversed snake.
\begin{lemma}
For any $x\in \Z^4$, we have:
\begin{equation}\label{def-EE}
\EE_K(x)\doteq \lim_{n\rightarrow \infty }\log n\cdot
\sum_{z\in\outBd \Ball(n)}\Hm_\KRW^{\Ball(n)}(z,x) \text{ exists}.
\end{equation}
\end{lemma}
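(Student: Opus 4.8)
The idea is to tie the escape sum $\sum_{z\in\outBd\Ball(n)}\Hm_\KRW^{\Ball(n)}(z,x)$ to the killed Green function $G_K(y,x)$ with $y$ sent to infinity, and then to exploit the sharp asymptotics of $G_K$ from Lemma \ref{G-4d}. Write $f_n(x)=\sum_{z\in\outBd\Ball(n)}\Hm_\KRW^{\Ball(n)}(z,x)$. First I would record the exact identity
\[
G_K(y,x)=\sum_{z\in\outBd\Ball(n)}G_K(y,z)\,\Hm_\KRW^{\Ball(n)}(z,x),\qquad x\in\Ball(n),\ y\notin\Ball(n),
\]
which is the last-exit form of the First-Visit Lemma (Lemma \ref{bd-1}) applied with $B=\Ball(n)$, together with the observation that, since $\theta$ has finite range, $\Hm_\KRW^{\Ball(n)}(z,x)=0$ unless $z\in\outBd\Ball(n)$, so the sum over $\Ball(n)^c$ is in fact a finite sum over $\outBd\Ball(n)$. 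Thus $G_K(y,x)$ is a weighted average of $\{G_K(y,z):z\in\outBd\Ball(n)\}$ with weights of total mass $f_n(x)$.

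Next, fix $\eps>0$ and take $N=N(\eps)$ from Lemma \ref{G-4d}, so that $G_K(w,w')=(1+O(\eps))\,(\log\|w'\|/\log\|w\|)\,g(w,w')$ whenever $\|w\|\ge\|w'\|\ge N$; crucially this holds uniformly over all $w'$ of a given (large) norm, which is precisely the form needed on the finite shell $\outBd\Ball(n)$. Choosing any $n\ge N(\eps)$ and recalling that every $z\in\outBd\Ball(n)$ has $\|z\|=n+O(1)$ (finite range), one gets $\log\|z\|=(1+o(1))\log n$ and, for $\|y\|$ large compared with $n$, $g(y,z)=g(y-z)=(1+o(1))\,a_4\|y\|^{-2}$ by \eqref{green} (using $\|y-z\|=(1+o(1))\|y\|$), all uniformly in $z\in\outBd\Ball(n)$. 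Inserting this into the identity gives, for every $n\ge N(\eps)$ and all $\|y\|$ large enough (depending on $\eps$ and $n$),
\[
a_4^{-1}\|y\|^2\log\|y\|\cdot G_K(y,x)=(1+O(\eps))\,(\log n)\,f_n(x).
\]

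Finally I would conclude. The left-hand side $\phi(y):=a_4^{-1}\|y\|^2\log\|y\|\,G_K(y,x)$ is a fixed function of $y$; the last display shows that for each $\eps$ the quantities $(\log n)f_n(x)$ with $n\ge N(\eps)$ all coincide up to a multiplicative factor $1+O(\eps)$ — in particular they are bounded, and $\phi(y)$ is trapped, for all large $\|y\|$, within a factor $1+O(\eps)$ of their common value, so $\lim_{y\to\infty}\phi(y)$ exists. Letting $\eps\to 0$ then shows $\{(\log n)f_n(x)\}_n$ is Cauchy, hence convergent; that is, $\EE_K(x)=\lim_{n\to\infty}(\log n)\sum_{z\in\outBd\Ball(n)}\Hm_\KRW^{\Ball(n)}(z,x)$ exists, and in fact $\EE_K(x)=a_4^{-1}\lim_{y\to\infty}\|y\|^2\log\|y\|\,G_K(y,x)$ (a representation that will be convenient for the proof of Theorem \ref{MT6}).

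The step to watch is the use of Lemma \ref{G-4d}: it is a genuine two-variable limit and says nothing about $G_K(y,x)$ for the \emph{fixed} point $x$, so one cannot shortcut past the intermediate shell $\outBd\Ball(n)$. Inserting that shell at a large but fixed scale $n\ge N(\eps)$ is what makes Lemma \ref{G-4d} applicable, and the one thing to verify with care is that the error terms there and in \eqref{green} are uniform over $\outBd\Ball(n)$ — which they are, because every point of that shell has norm within a bounded additive constant of $n$.
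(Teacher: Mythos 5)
Your argument is correct and is essentially the paper's: both rest on the last-exit identity $G_K(y,x)=\sum_{z\in\outBd\Ball(n)}G_K(y,z)\,\Hm_\KRW^{\Ball(n)}(z,x)$ from Lemma \ref{bd-1}, the application of Lemma \ref{G-4d} only to pairs of points that both tend to infinity, and a Cauchy-type conclusion. The paper merely packages the comparison differently --- it first isolates the uniform statement \eqref{cr1} for the escape sum to a moving target and then compares the shells at radii $n$ and $m>n$ against each other, whereas you compare each shell directly to $\|y\|^2\log\|y\|\,G_K(y,x)$ as $y\to\infty$; the representation $\EE_K(x)=a_4^{-1}\lim_{y\to\infty}\|y\|^2\log\|y\|\,G_K(y,x)$ you extract is implicit in the paper's subsequent proof of Theorem \ref{MT6}.
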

\begin{remark}
Note that $\Hm_\KRW^{\Ball(n)}(z,x)=\Hm_\KRW^{\Ball(n)\setminus K}(z,x)$ and $\sum_{z\in\outBd \Ball(n)}\Hm_\KRW^{\Ball(n)}(z,x)$ is roughly the probability that a reversed snake starting from $x$ does not return to $K$, except for the bush grafted to the root, until the backbone reaches outside of $\Ball(n)$. For the random walk in critical dimension ($d=2$), we also have (e.g. see Section 2.3 in \cite{L91}):
$$
E_K(x)\doteq \lim_{n\rightarrow \infty }\log n\cdot
\sum_{z\in\outBd \Ball(n)}\Hm^{\Ball(n)\setminus K}(z,x) \text{ exists},\quad \text{for any }x\in\Z^2, K\ssubset \Z^2;
$$
and
$$
\lim_{x\rightarrow \infty}P(S_x(\tau_K)=a|S_x \text{ visits } K)=
\frac{1}{\pi^2\sqrt{\det Q}}E_K(a).
$$
\end{remark}
\begin{proof}
We first need to show:
\begin{equation}\label{cr1}
\lim_{n\rightarrow\infty, y\rightarrow \infty:\;\|y\|\leq n}\frac{\log n}{\log \|y\|}
\sum_{z\in\outBd \Ball(n)}\Hm_\KRW^{\Ball(n)}(z,y)=1.
\end{equation}
Choose some $x\in\Z^4$ such that $\|x\|\geq n\log n$. By the First-Visit Lemma, we have:
\begin{equation}\label{cr2}
G_K(x,y)=\sum_{z\in\outBd \Ball(n)}G_K(x,z)\Hm_\KRW^{\Ball(n)}(z,y).
\end{equation}
Due to the last Lemma, $G_K(x,y)\sim a_4\|x\|^{-2}\cdot \log\|y\|/\log\|x\|$,
$G_K(x,z)\sim a_4\|x\|^{-2}\cdot \log n/\log\|x\|$. Together with \eqref{cr2}, one can get \eqref{cr1}.

Now we are ready to show \eqref{def-EE}. Without loss of generality, assume $\|x\|>\Rad(K)$. Write
$$
a(n)=\log n\cdot
\sum_{z\in\outBd \Ball(n)}\Hm_\KRW^{\Ball(n)}(z,x).
$$
Note that, for any (large) $m> n$,
$$
\sum_{w\in\outBd \Ball(m)}\Hm_\KRW^{\Ball(m)}(w,x)=
\sum_{z\in\outBd \Ball(n)}\Hm_\KRW^{\Ball(n)}(z,x)\sum_{w\in\outBd \Ball(m)}\Hm_\KRW^{\Ball(m)}(w,z).
$$
By \eqref{cr1}, we have $\sum_{w\in\outBd \Ball(m)}\Hm_\KRW^{\Ball(m)}(w,z)\sim \log n/\log m$. This implies
$a(n)/a(m)\sim1$ and hence the convergence of $a(n)$.
\end{proof}

\begin{proof}[Proof of Theorem \ref{MT5}]
Let $n=\|x\|/\log \|x\|$ and $B=\Ball(n)$. Then,
\begin{align*}
&P(\Snake_x(\tau_K)=a|\Snake_x \text{ visits } K)=
\frac{\sum_{\gamma:x\rightarrow a}\BRW(\gamma)}{\pS_K(x)}
\stackrel{\eqref{MT_4}}{\sim}
 \frac{\sum_{z\in\outBd B}G_K(x,z)\Hm_\KRW^B(z,a)}{1/2\sigma^2\|x\|^2\log \|x\|}\\
&\sim \frac{a_4\|x\|^{-2}\sum_{z\in\outBd B}\Hm_\KRW^B(z,a)}{1/2\sigma^2\|x\|^2\log \|x\|}
\sim \frac{a_4\|x\|^{-2}\EE_K(a)\log^{-1} n}{1/2\sigma^2\|x\|^2\log \|x\|}
\sim 2\sigma^2a_4\EE_K(a)=\frac{\sigma^2\EE_K(a)}{4\pi^2\sqrt{\det Q}}.\\
\end{align*}
\end{proof}

\section{The range of branching random walk conditioned on the total size.}
The main goal of this section is to construct the asymptotics of the range of the branching random walk conditioned on the total size, i.e. Theorem \ref{MT5}. Our proof of this theorem (and some other results in this subsection) is based on some ideas from \cite{LL141}. Especially, we need to use the invariant shift on the invariant snake, $\Snake^I$.

For the invariant $\Snake^I$, recall that its backbone is just a random walk. We write $\tau_n$ for the hitting time (vertex) of $(\Ball(n))^c$ by the backbone. Thanks to Proposition \ref{thm-es}, we can obtain the following:
\begin{prop}\label{esc-4d}
$$
P(\Snake^I_0(v)\neq0,\; \forall v \leq \tau_n \text { not on the spine} )\sim \frac{4\pi^2\sqrt{\det Q}}{\sigma^2}\frac{1}{\log n};
$$
$$
P(\Snake^I_0(v_i)\neq0,\; i=1,2,\dots,n )\sim \frac{16\pi^2\sqrt{\det Q}}{\sigma^2}\frac{1}{\log n};
$$
where $v_1< v_2<v_3<\dots$ are all vertices of $\Snake^I_0$ that are not on the spine.
\end{prop}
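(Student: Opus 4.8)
The plan is to extract both assertions from Proposition~\ref{thm-es} applied with $K=\{0\}$: the first by conditioning on the backbone of $\Snake^I_0$, the second by converting ``the first $m$ vertices off the spine'' into a statement about how far the backbone has travelled, a passage in which a square root appears twice and produces the extra factor $4$ (i.e. $16$ versus $4$) in the second assertion.

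First assertion. The backbone $(W_i)_{i\ge0}$ of $\Snake^I_0$ is a random walk from $0$ (with step law $\theta$, since $\theta$ is symmetric), and $\tau_n=v_{\sigma_n}$ where $\sigma_n=\min\{i\ge0:W_i\notin\Ball(n)\}$. Conditionally on the backbone the bushes grafted along the spine are independent, and for $i\ge1$ the bush at $v_i$ is an adjoint snake from $W_i$, which avoids $0$ away from its root with probability $1-\rS_{\{0\}}(W_i)=1-\KRW(W_i)$ as soon as $W_i\ne0$. Since $d=4$ the walk is transient and meets $0$ finitely often, so after peeling off the root bush and the finitely many excursions of $W$ near the origin (an $n$-independent computation, which is where the precise shape of the \emph{invariant} snake enters) one is reduced to showing
$$
E\Big[\prod_{i=1}^{\sigma_n-1}\big(1-\KRW(W_i)\big)\Big]\ \sim\ \frac{4\pi^2\sqrt{\det Q}}{\sigma^2}\cdot\frac1{\log n}.
$$
Reversing the path $(W_0,\dots,W_{\sigma_n})$ and using symmetry of $\theta$, the left side becomes, up to negligible endpoint factors, a sum of weights $\BRW(\gamma)$ over lattice paths $\gamma$ starting just outside $\Ball(n)$, ending at $0$, and otherwise staying inside $\Ball(n)$; the exit vertex $W_{\sigma_n}$ lies in $\Ball(2n)$ outside an event of probability $\ll(\log n)^{-1}$ by the Overshoot Lemma (Lemma~\ref{overshoot}), and what remains is precisely the quantity estimated in \eqref{thm_es}.

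Second assertion. Let $K_m$ be the least $k$ such that the bushes at $v_0,\dots,v_{k-1}$ carry at least $m$ vertices; then $\{\Snake^I_0(v_i)\ne0,\ i=1,\dots,m\}$ is sandwiched between ``the bushes at $v_0,\dots,v_{K_m}$ avoid $0$'' and ``the bushes at $v_0,\dots,v_{K_m-1}$ avoid $0$''. Each bush ($i\ge1$) is an adjoint $\mu$-GW tree whose number of non-root vertices is a sum of $\widetilde\mu$-many (mean $\sigma^2/2$) independent critical $\mu$-GW tree sizes, and a finite-variance critical GW tree has size tail of order $t^{-1/2}$; hence the first $k$ bushes carry of order $(\sigma^2k/2)^2$ vertices, concentrated around that value up to factors polynomial in $\log m$, so $K_m$ is of order $2\sqrt m/\sigma^2$ up to such factors and $\log K_m\sim\tfrac12\log m$. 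Since also $\sigma_n\asymp n^2$ up to polylog factors, the first assertion applied at scale $n\asymp\sqrt k$ gives that the first $k$ bushes all avoid $0$ with probability $\sim\frac{4\pi^2\sqrt{\det Q}}{\sigma^2}\cdot\frac1{\log\sqrt k}=\frac{8\pi^2\sqrt{\det Q}}{\sigma^2}\cdot\frac1{\log k}$; taking $k=K_m$ and $\log K_m\sim\tfrac12\log m$ yields the claimed $\frac{16\pi^2\sqrt{\det Q}}{\sigma^2\log m}$.

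The main obstacle is the quantitative ``concentration up to polylog'' that both steps rest on. The backbone exiting $\Ball(n)$ in $\asymp n^2$ steps is routine; the delicate input is that the first $k$ bushes carry $(\sigma^2k/2)^2$ non-spine vertices up to polylog factors, a $1/2$-stable scaling that must be made uniform in both tails (upper tail from the $t^{-1/2}$ tree-size tail, lower tail from the thin left tail of $1/2$-stable subordinators). Since $E[\prod_{i=1}^{\sigma_n-1}(1-\KRW(W_i))]$ depends on the full backbone path and not just on $\sigma_n$, this concentration must moreover be combined with monotonicity in the number of bushes to replace the random $\sigma_n$ (resp. $K_m$) by a deterministic scale; all errors need only be of smaller order than the target $1/\log$, so polylog precision is enough, but it has to be proved.
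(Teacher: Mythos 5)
Your route is essentially the paper's: the first assertion is reduced to Proposition \ref{thm-es} with $K=\{0\}$ by reversing the backbone (whose steps have law $\Rtheta$, so the reversal of $\SRW$-weights needs no symmetry assumption), and the second follows from the first via the scaling $v_n\approx\tau_{\lfloor n^{1/4}\rfloor}$. Your two-step passage through $K_m\asymp\sqrt m$ and $\sigma_n\asymp n^2$ is the same computation the paper does in one step, and the quantitative inputs you identify (the $t^{-1/2}$ tree-size tail for the upper deviation, the thin lower tail of the $1/2$-stable sum for the other side, all exceptional events of probability $o((\log n)^{-1})$, plus monotonicity in the number of bushes) are exactly what is needed there; that part is sound.

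The genuine gap is in the ``peeling'' step of the first assertion. Conditioning on the backbone $(W_i)$ gives
$P(\cdot)=p_0\,E\bigl[\prod_{i=1}^{\sigma_n-1}(1-k'(W_i))\bigr]$, where $p_0$ is the probability that the (full, not adjoint) bush at the root avoids $0$ off its root, and $k'(x)$ is the probability that an adjoint snake from $x$ hits $0$ at a non-root vertex; $k'$ agrees with $\KRW=\rS_{\{0\}}$ everywhere except at $x=0$, where $\KRW(0)=1$ while $k'(0)<1$. Decomposing the backbone at its last visit to $0$ factors this expectation as $\bigl(\sum_{\gamma:0\to0}\BRW_{k'}(\gamma)\bigr)$ times (up to negligible endpoint factors) the quantity $E\bigl[\prod_{i=1}^{\sigma_n-1}(1-\KRW(W_i))\bigr]$ that you reduce to. So your reduction silently asserts the identity $p_0\sum_{\gamma:0\to0}\BRW_{k'}(\gamma)=1$. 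This is not an innocuous ``$n$-independent computation'' that follows from transience of the walk: it is a nontrivial identity, and if the constant were anything other than $1$, both asymptotics in the proposition would be off by exactly that factor. The paper establishes it as \eqref{a3}, by applying the last-visiting-point analogue of \eqref{p2} from \cite{Z161} to $K=\{0\}$, for which the visiting probability is trivially $1$. You must supply this identity (or an equivalent exact evaluation of the peeled-off constant) for the constant $4\pi^2\sqrt{\det Q}/\sigma^2$ to be justified.
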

\begin{proof}
By Proposition \ref{thm-es} (set $K=\{0\}$) and the Overshoot Lemma, we have:
$$
\sum_{\gamma:(\Ball(n))^c\rightarrow 0,\gamma\subseteq \Ball(n)}\BRW(\gamma)\sim
\frac{4\pi^2\sqrt{\det Q}}{\sigma^2}\frac{1}{\log n}.
$$
Hence, the first assertion can be obtained if we can show
$$
P(\Snake^I_0(v)\neq0,\; \forall v \leq \tau_n  \text { not on the spine} )\sim
\sum_{\gamma:(\Ball(n))^c\rightarrow 0,\gamma\subseteq \Ball(n)}\BRW(\gamma).
$$

Let $p_0=P(\Snake_0\text{ does not visit }0\text{ except at the root})$ and the new killing function $k'(x)$ be the probability that $\Snake'_x$ returns to $0$ (except possibly for the starting point). Note that $k'(x)=\KRW_K(x)$ when $x\neq 0$. We write $\BRW_{k'}(\gamma)$ for the probability weight of $\gamma$ with this killing function.
Then, we have
\begin{align*}
P&(\Snake^I_0(v)\neq0,\; \forall v \leq \tau_n \text { not on the spine} )\sim
p_0\sum_{\gamma:(\Ball(n))^c\rightarrow 0,\;\gamma\subseteq\Ball(n)}\BRW_{k'}(\gamma)\\
&=p_0(\sum_{\gamma:(\Ball(n))^c\rightarrow 0,\;\gamma\subseteq\Ball(n)\setminus\{0\}}\BRW_{k'}(\gamma))
(\sum_{\gamma:0\rightarrow 0,\;\gamma\subseteq\Ball(n)}\BRW_{k'}(\gamma)).\\
\end{align*}
Note that $\lim_{n\rightarrow \infty}\sum_{\gamma:0\rightarrow 0,\;\gamma\subseteq\Ball(n)}\BRW_{k'}(\gamma)=
\sum_{\gamma:0\rightarrow 0}\BRW_{k'}(\gamma)$ and
$$
\sum_{\gamma:(\Ball(n))^c\rightarrow 0,\;\gamma\subseteq\Ball(n)}\BRW(\gamma)=
\sum_{\gamma:(\Ball(n))^c\rightarrow 0,\;\gamma\subseteq\Ball(n)\setminus\{0\}}\BRW_{k'}(\gamma).
$$
Hence, for the first assertion, it is sufficient to show:
\begin{equation}\label{a3}
p_0\sum_{\gamma:0\rightarrow 0}\BRW_{k'}(\gamma)=1.
\end{equation}
Note that \eqref{p2} is obtained from the viewpoint of 'the first visiting point'. In fact we also have an analogous formula from the viewpoint of 'the last visiting point' (see Section 5 in \cite{Z161}) which implies that:
$$
P(\Snake_0 \text{ visits } K)=p_0\sum_{\gamma:0\rightarrow K}\BRW_{k'}(\gamma).
$$
This is just \eqref{a3} (note that $K=\{0\}$) and finishes the proof of the first assertion. The second assertion is an easy consequence of the first one, noting that, for any $\epsilon\in(0,1/4)$ fixed, $P(v_n\leq \tau_{\lfloor n^{1/4-\epsilon}\rfloor})$ and ,$P(v_n \geq \tau_{\lfloor n^{1/4+\epsilon}\rfloor})$ are $o((\log n)^{-1})$.
\end{proof}

Now we can construct the following result about the range of $S^I$:

\begin{theorem}\label{ran-1}
Set $R^I_n:=\#\{\Snake_0^I(o),\Snake_0^I(v_1),\dots,\Snake_0^I(v_n)\}$ for every integer $n\geq 0$. We have:
$$
\frac{\log n}{n}R^I_n\stackrel{L^2}{\longrightarrow} \frac{16\pi^2\sqrt{\det Q}}{\sigma^2}
\quad\text{as } n\rightarrow\infty,
$$
where $v_1,v_2,\dots$ are the same as in Proposition \ref{esc-4d}. Hence, we have:
$$
\frac{\log n}{n}R^I_n\stackrel{P}{\longrightarrow} \frac{16\pi^2\sqrt{\det Q}}{\sigma^2}
\quad\text{as } n\rightarrow\infty.
$$
\end{theorem}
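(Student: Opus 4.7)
The plan is a first- and second-moment computation for $R^I_n$; once both are identified, $L^2$ convergence of $\log n\cdot R^I_n/n$ to $C := 16\pi^2\sqrt{\det Q}/\sigma^2$ follows, and convergence in probability is immediate. Setting $u_0 := o$ and $u_i := v_i$ for $i\geq 1$, decompose
\begin{equation*}
R^I_n = \sum_{i=0}^n W_i,\quad W_0 := 1,\quad W_i := \mathbf{1}\{\Snake^I_0(u_i)\neq \Snake^I_0(u_j)\text{ for all }j<i\}\quad (i\geq 1).
\end{equation*}

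For the first moment, I would show $E[W_i]\sim C/\log i$ as $i\to\infty$. The mechanism is the shift invariance of $\Snake^I$ along its spine: the invariant snake viewed from any spine vertex $s_k$ has the same law as an invariant snake rooted at $\Snake^I_0(s_k)$. Re-rooting at the spine vertex $s_{k_i}$ whose bush contains $v_i$ reorganizes the vertices $u_0,\dots,u_{i-1}$ as non-spine vertices of the re-rooted snake, so $\{W_i=1\}$ becomes a no-return event of exactly the form handled by the second assertion of Proposition~\ref{esc-4d}, at the scale set by $i$ (since the first $i$ non-spine vertices typically live in bushes attached to the first $\asymp i$ spine vertices, i.e.\ in a ball of radius $\asymp\sqrt{i}$). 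Summation $\sum_{i=0}^n C/\log i$, compared with $\int_2^n du/\log u\sim n/\log n$ and with the small-$i$ tail absorbed as $o(n/\log n)$, gives $E[R^I_n]\sim Cn/\log n$.

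For the variance, expand $E[(R^I_n)^2]=\sum_{i,j}E[W_i W_j]$. The diagonal $i=j$ contributes $E[R^I_n]=O(n/\log n)$, negligible compared with the target $(n/\log n)^2$. For off-diagonal $i<j$, I would apply the invariant shift at both indices: for well-separated scales the two escape events decouple and $E[W_i W_j]=(1+o(1))(C/\log i)(C/\log j)$; for comparable scales the correlation is controlled via a Green-function convolution estimate in the spirit of Lemma~\ref{conv-3} applied to the triple $0,\Snake^I_0(v_i),\Snake^I_0(v_j)$, together with Lemma~\ref{G-4d}. Summation then gives $E[(R^I_n)^2]=(1+o(1))(Cn/\log n)^2$, whence $\mathrm{Var}(\log n\cdot R^I_n/n)\to 0$ and the $L^2$ convergence follows.

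The main obstacle is the off-diagonal second moment at comparable scales. Both newness events are genuinely global --- each depends on all non-spine vertices up to its time --- so exact decoupling is impossible, and the approximate independence has to be extracted by isolating each event's past via the invariant shift and then bounding the residual correlation through Green-function-type estimates. The logarithmic factor peculiar to $d=4$ provides exactly the margin needed to absorb this correlation in the $(\log n)^{-2}$-scaled variance, which is why the argument works only in the critical dimension.
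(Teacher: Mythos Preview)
Your first- and second-moment plan is the right framework, and it is what the paper does. The gap is in how you propose to exploit the shift invariance.

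You use the first-visit decomposition $W_i=\mathbf{1}\{\hat v_i\neq\hat v_j,\ \forall j<i\}$. The paper instead writes $R^I_n=\sum_i\mathbf{1}\{\hat v_j\neq\hat v_i,\ \forall j\in[i+1,n]\}$ via \emph{last}-visit indicators, and the choice is not cosmetic. The Le~Gall--Lin shift $\varsigma$ re-roots at the first non-spine vertex and \emph{removes} the vertices strictly before the parent of the new root; iterating, $\varsigma^i$ sends $v_i$ to the new root and $v_{i+1},\dots,v_n$ to the new $v_1,\dots,v_{n-i}$, while $v_0,\dots,v_{i-1}$ are discarded. Hence the shift turns a last-visit event (which looks only forward from $v_i$) directly into the no-return event of Proposition~\ref{esc-4d}, giving $P(\hat v_j\neq\hat v_i,\ \forall j\in[i+1,n])=P(\hat v_j\neq 0,\ \forall j\in[1,n-i])$. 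A first-visit event looks backward and is not accessible this way. Your proposed spine re-rooting---viewing the snake from $s_{k_i}$ so that $u_0,\dots,u_{i-1}$ become non-spine vertices of the new snake---is not law-preserving: looking forward from $s_k$ with the earlier spine segment and bushes dropped yields an infinite (not invariant) snake, and in any case the past is erased, not reorganized into the future of the shifted object.

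The second-moment argument in the paper is also much lighter than your outline. There is no need for termwise asymptotics $E[W_iW_j]\sim(C/\log i)(C/\log j)$, and Lemmas~\ref{conv-3} and~\ref{G-4d} (which concern $G_K$ for a fixed target $K$) play no role here. It is enough to show $\limsup(\log n/n)^2E[(R^I_n)^2]\leq C^2$, which together with the first moment forces the $L^2$ limit. After one shift by $i$, the paper truncates the first no-return event to indices $k\leq\sigma_n$, where $\sigma_n$ is a spine stopping time of size $n^{1-\alpha}$; restricting to $j-i>n^{1-\alpha}$ and to $\sigma_n\in[n^{1-3\alpha},n^{1-\alpha}]$ makes the two events conditionally independent, a second application of the shift handles the remaining factor, and the contribution of the excluded indices and of $\{\sigma_n\notin[n^{1-3\alpha},n^{1-\alpha}]\}$ is $o((\log n)^{-2})$. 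Switching your decomposition to last-visit indicators and replacing the Green-function correlation step by this truncation would close the gap.
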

\begin{remark}
Since the typical number of vertices in the spine that come before $v_n$ is of order $\sqrt{n}$, which is much less than $n/\log n$, one can get,
$$
\frac{\log n}{n} \#\{\Snake_0^I(\bar{v}_0),\Snake_0^I(\bar{v}_1),\dots,\Snake_0^I(\bar{v}_n)\} \stackrel{P}{\longrightarrow} \frac{16\pi^2\sqrt{\det Q}}{\sigma^2}
\quad\text{as } n\rightarrow\infty,
$$
where $\bar{v}_0,\bar{v}_1,\dots$ are all vertices due to the default order in the corresponding plane tree $T$ in $\Snake^I_0$.
\end{remark}

\begin{proof}[Proof of Theorem \ref{ran-1}]
As mentioned before, we need to use the invariant shift $\varsigma$ on spacial trees, which appeared in \cite{LL141}. For any spacial tree $(T, \Snake_T)$, set $\varsigma(T, \Snake_T)=(T',\Snake'_{T'})$. Roughly speaking, one can get $T'$ by 'rerooting' $T$ at the first vertex that is not in the spine and then removing the vertices that are strict before the parent of the new root. For $\Snake'_{T'}$, just set:
$$
\Snake'_{T'}(v)=\Snake_T(v)-\Snake_T(o'), \quad\text{ for any }v\in \varsigma(T),
$$
where $o'$ is the new root.  The key result is that $\varsigma$ is invariant under the law of the invariant snake from the origin. For more details about this shift transformation, see Section 2 in \cite{LL141}.

Now we start our proof. For simplicity, write $\hat{v}_0=0(\in\Z^4)$ and  $\hat{v}_i=\Snake_0^I(v_i)$. First observe that:
$$
E(R^I_n)=E(\sum_{i=0}^n\textbf{1}_{\{\hat{v}_j\neq\hat{v}_i,\forall j\in[i+1,n]\}})
=\sum_{i=0}^nP(\hat{v}_j\neq\hat{v}_i,\forall j\in[i+1,n]).
$$
From the invariant shift mentioned in the beginning, we have
$$
P(\hat{v}_j\neq\hat{v}_i,\forall j\in[i+1,n])=P(\hat{v}_j\neq\hat{v}_0,\forall j\in[1,n-i]).
$$
Therefore by Proposition \ref{esc-4d}, we get
\begin{equation}\label{ran1}
E(R^I_n)=\sum_{i=0}^nP(\hat{v}_j\neq\hat{v}_0,\forall j\in[1,n-i])
\sim \frac{16\pi^2\sqrt{\det Q}}{\sigma^2}\frac{n}{\log n}.
\end{equation}
Now we turn to the second moment. Similarly, we have
\begin{align*}
E((R^I_n)^2)&=E(\sum_{i=0}^n\sum_{j=0}^n\textbf{1}_{\{\hat{v}_k\neq\hat{v}_i,\forall k\in[i+1,n];
\hat{v}_l\neq\hat{v}_j,\forall l\in[j+1,n]\}})\\
&=2\sum_{0\leq i<j\leq n}P(\hat{v}_k\neq\hat{v}_i,\forall k\in[i+1,n];
\hat{v}_l\neq\hat{v}_j,\forall l\in[j+1,n])+E(R_n)\\
&=2\sum_{0\leq i<j\leq n}P(\hat{v}_k\neq0,\forall k\in[1,n-i];
\hat{v}_l\neq\hat{v}_{j-i},\forall l\in[j-i+1,n-i])\\
&\quad+E(R_n),\\
\end{align*}
where the last equality again follows from the invariant shift. For any fixed $\alpha\in(0,1/4)$ define
$$
\sigma_n:=\sup\{k\geq0: v_k\leq u_{\lfloor n^{\frac{1}{2}-\alpha}\rfloor}\},
$$
where $u_0\leq u_1\leq\dots$ are the all vertices on the spine.
By standard arguments, one can show
$$
P(\sigma_n\notin[n^{1-3\alpha}, n^{1-\alpha}])=o(\log^{-2} n).
$$
Therefore we have
\begin{multline*}
\limsup_{n\rightarrow \infty}(\frac{\log n}{n})^2E((R^I_n)^2)=\limsup_{n\rightarrow \infty}
2(\frac{\log n}{n})^2\sum_{0\leq i<j\leq n}P(\hat{v}_k\neq0,\forall k\in[1,n-i];\\
\hat{v}_l\neq\hat{v}_{j-i},\forall l\in[j-i+1,n-i];\sigma_n\in[n^{1-3\alpha}, n^{1-\alpha}]).
\end{multline*}
Obviously, in order to study the limsup in the right-hand side, we can restrict the sum to indices $i$ and $j$ such that $j-i>n^{1-\alpha}$. However, when $i$ and $j$ are fixed and satisfied with $j-i>n^{1-\alpha}$,
\begin{align*}
P&(\hat{v}_k\neq0,\forall k\in[1,n-i];
\hat{v}_l\neq\hat{v}_{j-i},\forall l\in[j-i+1,n-i];\sigma_n\in[n^{1-3\alpha}, n^{1-\alpha}])\\
&\leq P(\hat{v}_k\neq0,\forall k\in[1,\sigma_n];
\hat{v}_l\neq\hat{v}_{j-i},\forall l\in[j-i+1,n-i];\sigma_n\in[n^{1-3\alpha}, n^{1-\alpha}])\\
&=P(\hat{v}_k\neq0,\forall k\in[1,\sigma_n];\sigma_n\in[n^{1-3\alpha}, n^{1-\alpha}])
P(\hat{v}_l\neq\hat{v}_{j-i},\forall l\in[j-i+1,n-i])\\
&=P(\hat{v}_k\neq0,\forall k\in[1,\sigma_n];\sigma_n\in[n^{1-3\alpha}, n^{1-\alpha}])
P(\hat{v}_l\neq0,\forall l\in[1,n-j]).
\end{align*}
Note that for the second last line, we use the fact that after conditioning on $\sigma_n=m(<n^{1-\alpha})$, the event on the second probability is independent to the event on the first one, and for the last line, we use the invariant shift. Now,
$$
P(\hat{v}_k\neq0,\forall k\in[1,\sigma_n];\sigma_n\in[n^{1-3\alpha}, n^{1-\alpha}])
\leq P(\hat{v}_k\neq0,\forall k\in[1,n^{1-3\alpha}]),
$$
and then we have
\begin{align*}
\limsup_{n\rightarrow \infty}&(\frac{\log n}{n})^2E((R^I_n)^2)
\leq \limsup_{n\rightarrow \infty}2(\frac{\log n}{n})^2\cdot\\
&\sum_{0\leq i<j\leq n, j-i>n^{1-\alpha}}
P(\hat{v}_k\neq0,\forall k\in[1,n^{1-3\alpha}])P(\hat{v}_l\neq0,\forall l\in[1,n-j])\\
&=\frac{1}{1-3\alpha}(\frac{16\pi^2\sqrt{\det Q}}{\sigma^2})^2.
\end{align*}
Let $\alpha\rightarrow 0^+$, we get
$$
\limsup_{n\rightarrow\infty}(\frac{\log n}{n})^2E((R^I_n)^2)\leq(\frac{16\pi^2\sqrt{\det Q}}{\sigma^2})^2.
$$
Combining this with \eqref{ran1}, we finish the proof of Theorem \ref{ran-1}.
\end{proof}

Noting that $\Snake_0^-$ is different to $\Snake_0^I$ only at the subtree grafted to the root, one can also obtain the range of the infinite snake $\Snake^-$:
\begin{corollary}\label{ran-2}
Set $R^-_n:=\#\{\Snake_0^-(v_0),\Snake_0^-(v_1),\dots,\Snake_0^-(v_n)\}$. Then,
$$
\frac{\log n}{n}R^-_n\stackrel{P}{\longrightarrow} \frac{16\pi^2\sqrt{\det Q}}{\sigma^2}
\quad\text{as } n\rightarrow\infty,
$$
where $v_0,v_1,\dots$ are all vertices of corresponding plane tree due to the default order in the reversed snake.
\end{corollary}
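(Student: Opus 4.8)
The plan is to deduce the corollary from Theorem \ref{ran-1} by coupling the reversed infinite snake $\Snake_0^-$ with the invariant snake $\Snake_0^I$: the two are built from the same spine and differ only through the bush grafted to the root, and a finite root bush cannot affect the range on the scale $n/\log n$. More precisely, I will use the version of Theorem \ref{ran-1} recorded in the Remark following it, namely that $\frac{\log n}{n}\,\#\{\Snake_0^I(\bar v_0),\dots,\Snake_0^I(\bar v_n)\}\stackrel{P}{\longrightarrow}\frac{16\pi^2\sqrt{\det Q}}{\sigma^2}$, where $\bar v_0,\bar v_1,\dots$ are the vertices of $\Snake_0^I$ in the default order; write $\widehat R_n^I$ for this quantity.

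First I would realize $\Snake_0^-$ and $\Snake_0^I$ on one probability space so that they share the same spine, the same $\Rtheta$-increments on the spine edges, and the same bushes together with the same $\theta$-increments on them, for every spine vertex other than the root; the only difference is the bush at the root, which is an independent adjoint $\mu$-GW tree for $\Snake_0^-$ and an independent $\mu$-GW tree for $\Snake_0^I$, carrying independent increments. Since $\mu$ is critical with $\mu(1)<1$ and $\widetilde\mu$ has finite mean $\sigma^2/2$, both root bushes have an almost surely finite number of vertices, say $M^-$ and $M^I$. In the default order the vertices of either tree appear as the root $o$, then its root bush, then a common \emph{post-root-bush part} consisting of the spine vertices $u_1,u_2,\dots$ with their bushes; under the coupling this part is literally the same ordered spatial tree for both snakes, since the spatial positions of its vertices depend only on increments shared by the two processes. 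Writing $W_k$ for the set of spatial positions of its first $k$ vertices (so $|W_{k+1}|-|W_k|\le 1$), we obtain, for $n\ge\max(M^-,M^I)$,
\begin{equation*}
R_n^-=\#\big(\Snake_0^-(\text{root bush})\cup W_{n-M^-}\big),\qquad
\widehat R_n^I=\#\big(\Snake_0^I(\text{root bush})\cup W_{n-M^I}\big),
\end{equation*}
hence $|R_n^--|W_{n-M^-}||\le M^-$, $|\widehat R_n^I-|W_{n-M^I}||\le M^I$ and $\big||W_{n-M^-}|-|W_{n-M^I}|\big|\le|M^--M^I|$; adding these gives $|R_n^--\widehat R_n^I|\le M^-+M^I+|M^--M^I|$, a random variable that is finite almost surely and does not depend on $n$, which I call $M$.

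It then follows that $\frac{\log n}{n}|R_n^--\widehat R_n^I|\le \frac{M\log n}{n}\to 0$ almost surely, so $\frac{\log n}{n}R_n^-$ converges in probability to the same limit as $\frac{\log n}{n}\widehat R_n^I$, namely $\frac{16\pi^2\sqrt{\det Q}}{\sigma^2}$. I expect the only delicate point to be the bookkeeping: verifying that the post-root-bush parts of the two trees genuinely agree as ordered spatial trees under the coupling, and handling the index shift when $M^-\ne M^I$. Granting that, the result is immediate from the almost sure finiteness of the root bushes together with Theorem \ref{ran-1}.
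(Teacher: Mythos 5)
Your coupling argument is exactly the justification the paper intends: the paper's own ``proof'' is the single remark that $\Snake_0^-$ and $\Snake_0^I$ differ only in the bush grafted at the root, and your write-up makes that precise (almost sure finiteness of both root bushes, the identical post-root-bush ordered spatial tree under the coupling, and the index shift controlled by $|M^--M^I|$). The proposal is correct and takes essentially the same approach as the paper.
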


Now we are ready to prove our main result about the range of branching random walk conditioned on the total size. This result will follow from Corollary \ref{ran-2} by an absolute continuity argument, which is similar to the one in the proof of Theorem 7 in \cite{LL141}. The idea is as follows. We write $\pr$ for the law of the $\mu$-GW tree. For every $a\in(0,1)$, the law under $\pr^n:=\pr(\cdot|\#T=n)$ of the
subtree obtained by keeping only the first $\lfloor an\rfloor$ vertices of $T$ is absolutely continuous with
respect to the law under $\pr^\infty(\cdot):=\pr(\cdot|\#T=\infty)$ of the same subtree, with a density that is bounded independently of $n$. Then a similar property holds for spatial trees, and hence we can use the convergence in Corollary \ref{ran-2}, for a tree distributed according to $\pr^\infty$, to get a similar convergence for a tree distributed according to $\pr^n$.

\begin{proof}[Proof of Theorem \ref{MT5}]
Let $\mathcal{G}$ be the smallest subgroup of $\Z$ that contains the support of $\mu$. In fact, the cardinality of the vertex set of a $\mu$-GW tree belongs to $1+\mathcal{G}$. For simplicity, we assume in the proof that $\mathcal{G}=\Z$. Minor modifications are needed for the general case. On the other hand, for any sufficiently large integer $n\in 1+\mathcal{G}$, we can define the conditional probability $\Snake^n$ to be $\Snake_0$ conditioned on the total number of vertices being $n$ (this event is with strictly positive probability).

For a finite plane tree $T$, write $v_0(T), v_1(T),\dots, v_{\#T-1}(T)$ for the vertices of $T$ by the default order. The Lukasiewisz path of $T$ is then the finite sequence $(X_l(T),0\leq l\leq \#T)$, which can be defined inductively by
$$
X_0(T)=0, X_{l+1}-X_l=k_{v_l(T)}(T)-1, \quad \text{for every }0\leq l<\#T,
$$
where $k_u(T)$(for $u\in T$) is the number of children of $u$. The tree $T$ is determined by its Lukasiewisz path. A key result says that under $\pr$, the Lukasiewisz path is distributed as a random walk on $\Z$ with jump distribution $\nu$ determined by $\nu(j)=\mu(j+1)$ for any $j\geq-1$, which starts from $0$ and is stopped at the hitting time of $-1$ (in particular, the law or $\#T$ coincides with the law of that hitting time). For notational convenience, we let $(Y_k)_{k\geq0}$ be a random walk on $\Z$ with jump distribution $\nu$, which starts form $i$ under $P_{(i)}$, and set
$$
\tau:=inf\{k\geq0:Y_k\leq-1\}.
$$

We can also do this for infinite trees. When $T$ ia an infinite tree with only one infinite ray, now the Depth-First search sequence $o=v_0<v_1<v_2<\dots<v_n<\dots$ only examines part of the vertex set of $T$. We could also define the Lukasiewisz path of $T$ to be the infinite sequence $(X_i(T),i\in\N)$:
$$
X_0(T)=0, X_{l+1}-X_l=k_{v_l(T)}(T)-1, \quad \text{for every }l\in\N.
$$
Now, only the 'left half' of $T$ (precisely, the subtree generated by $v_0,v_1,...$), no the whole tree $T$, is determined by its Lukasiewisz path. It is not difficult to verify that when $T$ is a $\mu$-GW tree conditioned on survival, its Lukasiewisz path is distributed as the random walk on the last paragraph conditioned on $\tau=\infty$, i.e, a Markov chain on $\N$ with transition probability $p(i,j)=\frac{j+1}{i+1}\nu(j-i)$. Recall that the infinite $\mu$-GW tree is just the 'left half' of the $\mu$-GW tree conditioned on survival.

Next, take $n$ large enough such that $\pr(\#T=n)>0$. Fix $a\in(0,1)$, and consider a tree (finite or infinite) $T$ with $\# T>n$. Then, the collection of vertices $v_0(T),\dots, v_{\lfloor an\rfloor}(T)$ forms a subtree of $T$ (because in the Depth-First search order the parent of a vertex comes before this vertex), and we denote this tree by $\subt_{\lfloor an\rfloor}(T)$. It is elementary to see that $\subt_{\lfloor an\rfloor}(T)$ is determined by the sequence $(X_l(T),0\leq l\leq \lfloor an\rfloor)$. Let $f$ be a bounded function on $\Z^{\lfloor an\rfloor}$. One can verify that
\begin{equation}\label{rr1}
\pr^n(f((X_k)_{0\leq k\leq \lfloor an\rfloor)})=\frac{1}{P_{(0)}(\tau=n+1)}\pr^\infty(f((X_k)_{0\leq k\leq \lfloor an\rfloor})
\frac{\psi_n(X_{\lfloor an\rfloor})}{X_{\lfloor an\rfloor}+1}),
\end{equation}
where for every $j\in N$, $\psi_n(j)=P_{(j)}(\tau=n+1-\lfloor an\rfloor)$.

We now let $n\rightarrow \infty$. Using Kemperman's formula and a standard local limit theorem, one can get,
\begin{equation}\label{rr2}
\lim_{n\rightarrow \infty}\left(\sup_{j\in A_n}|\frac{\psi_n(j)}{P_{(0)}(\tau=n+1)(j+1)}-\Gamma_a(\frac{j}{\sigma\sqrt{n}})|\right)=0,
\end{equation}
where $\Gamma_a(x)=\exp(-\frac{x^2}{2(1-a)})/(1-a)^{\frac{3}{2}}$ and $A_n:=\{i\in \N:P_{(i)}(\tau=n+1-\lfloor an\rfloor)>0\}$.
By combining \eqref{rr1} and \eqref{rr2}, we get that, for any uniformly bounded sequence of functions $(f_n)_{n\geq1}$ on $\Z^{\lfloor an\rfloor+1}$, we have
$$
\lim_{n\rightarrow\infty}|\pr^n(f_n((X_k)_{0\leq k\leq \lfloor an\rfloor}))-
\pr^\infty(f_n((X_k)_{0\leq k\leq \lfloor an\rfloor})\Gamma_a(\frac{X_{\lfloor an\rfloor}}{\sigma\sqrt{n}}))|=0.
$$
Clearly, the above still holds after we add the spatial random mechanism.  Therefore, when $\epsilon>0$ is fixed, we have
\begin{multline*}
\lim_{n\rightarrow\infty}|\pr_{\theta}^n(\textbf{1}_{\{|R_{\lfloor an\rfloor}-tan/\log n|
>\epsilon n/\log n\}})-\\
\pr_{\theta}^\infty(\textbf{1}_{\{|R_{\lfloor an\rfloor}-tan/\log n|
>\epsilon n/\log n\}}\Gamma_a(\frac{X_{\lfloor an\rfloor}}{\sigma\sqrt{n}}))|=0,
\end{multline*}
where $t=\frac{16\pi^2\sqrt{\det Q}}{\sigma^2}$, $\pr_{\theta}^n$, $\pr_{\theta}^\infty$ are the laws of the corresponding tree-indexed random walks, and $R_{\lfloor an\rfloor}$ is the range of the subtree $\subt_{\lfloor an\rfloor}(T)$. Note that the function $\Gamma_a$ is bounded and under $\pr_{\theta}^\infty$, $R_{\lfloor an\rfloor}$ is just the range of $\Snake^\infty_0$ for the first $\lfloor an\rfloor$ vertices. Hence, by Corollary \ref{ran-2} (note that $\Snake^-_0=\Snake^{\infty}_0$ since we assume that $\theta$ is symmetry), we obtain that
$$
\lim_{n\rightarrow\infty}\pr_{\theta}^n(\textbf{1}_{\{|R_{\lfloor an\rfloor}-tan/\log n|
>\epsilon n/\log n\}})=0.
$$
Note that $R_n\geq R_{\lfloor an\rfloor}$ (under $\pr_{\theta}^n$) and $a$ can be chosen arbitrarily close to $1$, this finishes the proof of the lower bound.

We also need to show the upper bound. Note that $\subt_{\lfloor an\rfloor}(T)$ is the subtree lying on the 'left' side, generated by the first $\lfloor an\rfloor$ vertices of $T$. Similarly, one can consider the subtree lying on the 'right' side. Strictly speaking, to get the subtree lying on the right side, denoted by $\subt^-_{\lfloor an\rfloor}(T)$, we first reverse the order of children for each vertex in $T$, and then $\subt_{\lfloor an\rfloor}$ of the same tree $T$ with the new order is just $\subt^-_{\lfloor an\rfloor}(T)$. Write $R^-_{\lfloor an\rfloor}$ for the range of $\subt^-_{\lfloor an\rfloor}(T)$ corresponding to $\Snake^n$. By symmetry, we also have
$$
\lim_{n\rightarrow\infty}\pr_{\theta}^n(\textbf{1}_{\{|R^-_{\lfloor an\rfloor}-tan/\log n|
>\epsilon n/\log n\}})=0.
$$

Now fix some $a\in(0,1)$. Note that $\subt_{\lfloor an\rfloor}(T)$ and $\subt^-_{\lfloor (1-a)n\rfloor}(T)$ cover the whole tree $T$ except for a number of vertices. This number is not more that $|\subt_{\lfloor an\rfloor}(T)\cap\subt^-_{\lfloor (1-a)n\rfloor}(T)|+2$. Note that on each generation, there is at most one vertex that is in both $\subt_{\lfloor an\rfloor}(T)$ and $\subt^-_{\lfloor (1-a)n\rfloor}(T)$. Hence
$|\subt_{\lfloor an\rfloor}(T)\cap\subt^-_{\lfloor (1-a)n\rfloor}(T)|$ is not more than the number of generations, which is typically of order $\sqrt{n}$ (under $\pr^n$). Hence, $R_n-(R_{\lfloor an\rfloor}(\Snake^n)+R^-_{\lfloor (1-a)n\rfloor}(\Snake^n))$ is less than $n^{0.6}$ with high probability (tending to 1). This finishes the proof of the upper bound.
\end{proof}

\section*{Acknowledgements}

We thank Professor Omer Angel for valuable comments on earlier versions of this paper.

\end{document}